\definecolor{vertFonce}	{rgb}{0,0.5,0}
\definecolor{numLignes}	{rgb}{0.17,0.57,0.7}	%{43,145,175}
\definecolor{gris}		{rgb}{0.5,0.5,0.5}
\definecolor{grisFonce}	{rgb}{0.2,0.2,0.2}
\definecolor{orange}	{rgb}{1,0.65,0.31}		%{255,167,79}
\definecolor{orangeFonce}{rgb}{1,0.4,0}
\definecolor{bleuFonce}	{rgb}{0,0,0.4}
\definecolor{rougeFonce}{rgb}{0.3,0,0}
\definecolor{rougeWord}	{rgb}{0.5,0,0}
\definecolor{vertClair}	{rgb}{0.8,1,0.8}
\definecolor{rougeClair}{rgb}{1,0.5,0.5}
\definecolor{violet}	{rgb}{0.5,0,0.5}
\newtheorem{lem}{Lemma}[section]
\newtheorem{theorem}{Theorem}[section]
\newtheorem{prop}{Proposition}[section]
\newtheorem{remark}{Remark}[section]
\newenvironment{system*}{%
	\begin{equation*}\left\{\ \begin{aligned}
}{%
	\end{aligned} \right. \end{equation*}%
}
\newcommand		{\N}		{\mathbb N}			% naturels
\newcommand		{\RR}		{\mathbb R}			% real numbers
\newcommand		{\R}		{\RR}
\newcommand		{\Rd}		{\R^3}
\newcommand		{\Rdd}		{\R^6}
\newcommand		{\hd}		{h^3}
\newcommand		{\cP}		{\mathcal P}		% Density operators
\newcommand		{\opP}		{\cP}				% Opérateurs de trace 1
\renewcommand	{\L}		{\mathcal L}		% Semiclassical schatten or law random var
\renewcommand	{\P}		{\mathscr P}		% Proba density
\newcommand		{\lt}			{\left}				%
\newcommand		{\rt}			{\right}			%
\renewcommand	{\(}			{\lt(}
\renewcommand	{\)}			{\rt)}
\newcommand		{\set}[1]		{\lt\{#1\rt\}}
\newcommand		{\bangle}[1]	{\lt\langle #1\rt\rangle}
\newcommand		{\weight}[1]	{\bangle{#1}}	% <x>
\newcommand		{\com}[1]		{\lt[{#1}\rt]}		% commutator
\newcommand		{\n}[1]			{\lt|{#1}\rt|}
\newcommand		{\nrm}[1]		{\lt\|{#1}\rt\|}
\newcommand		{\snrm}[1]		{\lVert #1\rVert}
\newcommand		{\Nrm}[2]		{\nrm{#1}_{#2}}
\newcommand		{\sNrm}[2]		{\snrm{#1}_{#2}}
\newcommand		{\indic}	{\mathds{1}}		% indicatrice
\renewcommand		{\d}		{\mathop{}\!\mathrm{d}}		% différentielle
\newcommand			{\dpt}		{\partial_t}
\newcommand			{\dt}		{\frac{\d}{\d t}}	% dérivée temporelle
\newcommand			{\Dx}		{\nabla_x}
\newcommand			{\Dv}		{\nabla_\xi}
\DeclareMathOperator{\cF}		{\mathcal{F}}		% transformée de fourier
\DeclareMathOperator{\tr}		{Tr}				% Trace
\DeclareMathOperator*{\supess}	{sup\,ess}
\newcommand		{\F}[1]			{\cF\!\( #1 \)}		% transformée de fourier
\newcommand		{\Tr}[1]		{\tr\!\( #1 \)}		% Trace
\newcommand		{\intd}			{\int_{\Rd}}
\newcommand		{\intdd}		{\int_{\Rdd}}
\newcommand		{\iintd}		{\iint_{\Rdd}}
\newcommand		{\init}			{\mathrm{in}}
\newcommand		{\loc}			{\mathrm{loc}}
\newcommand		{\eps}			{\varepsilon}
\newcommand		{\cC}			{\mathcal{C}}
\newcommand		{\op}		{\boldsymbol{\rho}}	% density operator
\newcommand		{\opgam}	{\boldsymbol{\gamma}}% couplage quantique
\newcommand		{\opc}		{\boldsymbol{c}}
\newcommand		{\Wh}		{\mathrm{W}_{2,\hbar}}		% Semiclassical Wasserstein
\newcommand		{\opp}		{\boldsymbol{p}}
\title[\textsc{Enhanced Stability in Quantum Optimal Transport}]{\Large Enhanced Stability in Quantum Optimal Transport Pseudometrics: From Hartree to Vlasov--Poisson}
\author[\textsc{M.~Iacobelli}]{\large\textsc{Mikaela Iacobelli}}
\address[M.~Iacobelli]{ETH Z\"urich, Department of Mathematics,\\ R\"amistrasse 101, 8092 Z\"urich, Switzerland}
\email[M.~Iacobelli]{mikaela.iacobelli@math.ethz.ch}
\author[\textsc{L.~Lafleche}]{\large\textsc{Laurent Lafleche}}
\address[L.~Lafleche]{Unit\'e de Math\'ematiques pures et appliqu\'ees, \\\'Ecole Normale Supérieure de Lyon, Lyon, France}
\email[L.~Lafleche]{laurent.lafleche@ens-lyon.fr}
\subjclass[2020]{81Q20 $\cdot$ 81S30 $\cdot$ 35Q83 $\cdot$ 35Q55 (82C10, 49Q22).}
\keywords{Hartree equation, Vlasov--Poisson equation, semiclassical limit, quantum optimal transport.}
\begin{document}

\begin{abstract}
	In this paper we establish almost-optimal stability estimates in quantum optimal transport pseudometrics for the semiclassical limit of the Hartree dynamics to the Vlasov--Poisson equation, in the regime where the solutions have bounded densities. We combine Golse and Paul's method from~[Arch. Ration. Mech. Anal. 223:57–94, 2017], which uses a semiclassical version of the optimal transport distance and which was adapted to the case of the Coulomb and gravitational interactions by the second author in~[J. Stat. Phys. 177:20--60, 2019], with a new approach developed by the first author in~[Arch. Ration. Mech. Anal. 244:27--50, 2022] to quantitatively improve stability estimates in kinetic theory.
\end{abstract}

\begingroup
\def\uppercasenonmath#1{} % this disables uppercasing title
\let\MakeUppercase\relax % this disables uppercasing authors
\maketitle
\thispagestyle{empty} % removes page number on this page
\endgroup

\bigskip

%----------  Table of Contents  ----------
%\renewcommand{\contentsname}{\centerline{Table of Contents}}
%\setcounter{tocdepth}{2}	% profondeur du commentaire
%\tableofcontents
%\addcontentsline{toc}{chapter}{Table of Contents}

%% ********************  Contenu  ********************
 
\section{\large Introduction}

\subsection{General overview}

	In this article, we study the nonrelativistic quantum and classical equations that govern the dynamics of an infinite number of particles' density in the mean field regime. These equations are known as the Vlasov equation and the Hartree equation, respectively. More precisely we aim to give an almost-optimal convergence rate, in the semiclassical limit, from the Hartree equation to the Vlasov--Poisson equation, in the so-called quantum optimal transport pseudometrics.

\subsection{The Vlasov equation}

	The Vlasov equation is a non-linear partial differential equation that provides a statistical description of the collective behavior of a large number of charged particles undergoing mutual long-range interactions.

	This model was originally introduced by Jeans in the context of Newtonian stellar dynamics~\cite{jeans_theory_1915} and was later developed by Vlasov in his work on plasma physics~\cite{vlasov_vibrational_1938}. More precisely, it describes the evolution of the phase space distribution of particles $f = f(t,x,\xi)$ where $(x,\xi)\in \Rdd$ are the position and velocity variables and $t\in\R$ is the time. The equation then reads
	\begin{equation}\label{eq:Vlasov}
		\partial_t f + \xi\cdot\Dx f + E_f\cdot\Dv f = 0,
	\end{equation}
	where $E_f = -\nabla V_f$ is the force field corresponding to the mean-field potential
	\begin{equation*}
		V_f(x) = (K * \rho_f)(x) = \intd K(x-y)\,\rho_f(y)\d y
	\end{equation*}
	and $\rho_f$ is the position density of particles defined by
	\begin{equation}\label{eq:spatial_density}
		\rho_f(x) = \intd f(x,\xi)\d \xi.
	\end{equation}
	
	In the sequel we will be interested in the Vlasov--Poisson equation in dimension $d=3$. This equation serves as the classical kinetic model that characterizes dilute, completely ionized, and non-magnetized plasma, or galaxies. It corresponds to considering the coupling of the Liouville equation with the potential given by
	\begin{equation}\label{eq:Coulomb}
		K(x) = \frac{\kappa}{4\pi \n{x}}
	\end{equation}
	for some constant $\kappa\in\R$. Assuming that the unit is chosen so that particles have charge~$1$, in the case of the Coulomb law, the constant $\kappa$ is given by $\kappa = 1/\eps_0$ where $\eps_0$ is the electric constant. If the particles have mass~$1$, in the case of Newton's gravitational law, $\kappa$ is given by $\kappa = -4\pi\,G$ with $G$ the gravitational constant.

	The well-posedness theory for this system has undergone extensive investigation, and readers can refer to the comprehensive survey paper~\cite{griffin-pickering_recent_2021} for an overview. Notably, global-in-time classical solutions have been established under a range of initial data conditions, as explored in works such as~\cite{ukai_classical_1978, bardos_global_1985, batt_global_1991, lions_propagation_1991, pfaffelmoser_global_1992, schaeffer_global_1991}. Meanwhile, global-in-time weak solutions have been introduced in~\cite{arsenev_existence_1975} and \cite{horst_weak_1984}, primarily focusing on $L^p$ initial data, as elaborated in~\cite{bardos_global_1985, bardos_priori_1986}. Concerning uniqueness, we will focus on a very important contribution to the theory made by Loeper~\cite{loeper_uniqueness_2006}, who proved uniqueness for solutions of the Vlasov--Poisson equation with bounded density by means of a strong-strong stability estimate in the $2$-Wasserstein--(Monge--Kantorovich) distance. Loeper's result is the cornerstone for several stability arguments, and it is a key tool in proving the validity of the quasi-neutral limit for the Vlasov--Poisson equation~\cite{griffin-pickering_mean_2018, griffin-pickering_singular_2020, han-kwan_quasineutral_2017, han-kwan_quasineutral_2017, han-kwan_quasineutral_2017-1, griffin-pickering_stability_2023}. In this paper we will rely on a recent improvement of Loeper's estimate obtained in~\cite{iacobelli_new_2022} (see also \cite{iacobelli_stability_2022}).

\subsection{The Hartree equation}

	In the context of quantum mechanics, the distributions of particles is described by wave functions, and more generally, by operators, which can be seen as incoherent superposition of wave functions. More precisely, we define the set of density operators by
	\begin{equation*}
		\opP = \set{\op\in \L(L^2(\Rd)): \op\geq 0 \text{ and } \hd\Tr{\op} = 1},
	\end{equation*}
	where $\L(L^2(\Rd))$ denotes the set of linear operators acting on $L^2(\Rd)$, $\tr$ denotes the trace and $h$ is the Planck constant. Such operators are trace-class, and so automatically compact operators. For such operators, one can define a quantum version of the position density
	\begin{equation}\label{eq:spatial_density_quantum}
		\rho(x) = \hd\, \op(x,x),
	\end{equation}
	where the formula should in general be understood in the weak sense $\intd \rho\,\varphi = \hd \Tr{\op\,\varphi}$ for any $\varphi\in L^\infty(\Rd)$.
	
	The quantum analogue of the Vlasov--Poisson equation is the Hartree equation with Coulomb or gravitational potential, which can be written in its operator form as follows
	\begin{align}\label{eq:Hartree}
		i\hbar\,\partial_t \op &= \com{H_{\op},\op}, & H_{\op} &= -\frac{\hbar^2}{2}\, \Delta + V_{\op},
	\end{align}
	where $\hbar = \tfrac{h}{2\pi}$ is the reduced Planck constant, $\com{A, B} := AB-BA$ denotes the commutator of operators and $V_{\op}$ is the mean-field potential $V_{\op} = K*\rho(x)$ associated to $\rho(x)$, with $K$ again given by Equation~\eqref{eq:Coulomb}. By the spectral theorem, it can also be written as a system of infinitely many coupled Schr\"odinger equations (see e.g.~\cite{lions_sur_1993}).
	
	The well-posedness for this equation is also well understood. Global existence and uniqueness of solutions was proved in~\cite{brezzi_three-dimensional_1991} for repulsive interactions and in~\cite{illner_global_1994} for attractive interactions, for solutions in $H^2$, that is for operators such that $\Tr{-\Delta\op}$ is bounded. The regularity assumptions were then relaxed in~\cite{castella_$l^2$_1997} to the $L^2$ case, that is the case of trace-class density operators. In~\cite[Theorem~8.1]{castella_$l^2$_1997}, the propagation of higher Sobolev norms of the form $\Tr{(-\Delta)^n\op}$ for any $n\in\N$ is also proved, which can be interpreted as the quantum analogue of the propagation of moments.
	
	In the context of this paper, we will consider that the quantum position density is initially bounded uniformly in $\hbar$. This was proved to remain true for finite times in~\cite{lafleche_propagation_2019} by first proving the propagation of moments of the form $h^d\Tr{(-\hbar\Delta)^n\op}$ for any $n\in\N$ uniformly in $\hbar$. The global-in-time uniform-in-$\hbar$ propagation of these quantities for $n > 2$ in the case of the Coulomb or gravitational potentials remains an open problem. This was however obtained in the case of more decaying potentials and small data in~\cite{lafleche_global_2021} and in the case of less singular potentials in~\cite{lafleche_propagation_2019}, which was extended in~\cite{chong_global--time_2022} for the Hartree--Fock equation.

	\subsection{Previous results}\label{sec:previous_results}
	
	Our objective is to improve the rate of convergence in Wasserstein distances from the Hartree equation with Coulomb potential to the Vlasov--Poisson equation in the semiclassical limit, that is when the Planck constant becomes negligible. This semiclassical limit is also rather well understood. It was first proved in a weak sense by Lions and Paul in~\cite{lions_sur_1993} for general interaction potentials, including the Coulomb and gravitational ones. Quantitative rates of convergence were then obtained for sufficiently smooth potentials in strong topologies~\cite{athanassoulis_strong_2011-1, amour_classical_2013, amour_semiclassical_2013, benedikter_hartree_2016}. The semiclassical limit was also investigated in the case of positive densities using relative entropy methods in \cite{lewin_hartree_2020}.
	
	In~\cite{golse_mean_2016}, Golse, Mouhot and Paul introduced a quantum analogue of the $2$-Wasserstein distance to study the mean-field limit from the Schr\"odinger equation to the Hartree equation uniformly in the Planck constant, and then in~\cite{golse_schrodinger_2017}, Golse and Paul introduced a semiclassical analogue of the $2$-Wasserstein distance from which they obtained a quantitative estimate for potentials with Lipschitz forces. This was extended to singular potentials including the Coulomb and gravitational potentials in~\cite{lafleche_propagation_2019} and to the case of the presence of a magnetic field in~\cite{porat_magnetic_2022}. For singular potentials, quantitative rates of convergence were then also obtained in stronger topologies in~\cite{saffirio_semiclassical_2019, saffirio_hartree_2020, lafleche_strong_2023, chong_l2_2023}. A review of these recent results can be found in~\cite{lafleche_uniqueness_2023}.
	
	These latter results in strong topologies allow to have results that are global in time in the sense that they hold for any arbitrary large time, but they require more regular data for the limiting equation than the techniques using Wasserstein distances. On the other hand, for the Coulomb and gravitational potentials, the results in~\cite{lafleche_propagation_2019} are only local in time as they depend on the uniform-in-$\hbar$ propagation of moments for the Hartree equation. Moreover, on the interval $[0,T]$ on which the propagation of moments holds, as was computed more precisely in~\cite{lafleche_global_2021}, the rate of convergence in terms of $\hbar$ deteriorates very rapidly in time as it is of the form $\hbar^{\exp(-\lambda(t))}$ where $\lambda(t)$ is proportional to $1 + \sup_{t\in[0,T]}(\Nrm{\rho(t)}{L^\infty(\Rd)} + \Nrm{\rho_f(t)}{L^\infty(\Rd)})$.
	
\subsection{Notation}

	A typical object used to make the link between classical and quantum mechanics is the Weyl quantization~\cite{weyl_quantenmechanik_1927}, that associates to a function $f = f(x,\xi)$ of the phase space, an operator
	\begin{equation}\label{eq:Weyl_def_0}
		\op_f := \iintd \widehat{f}(y,\xi) \,e^{2i\pi\(y\cdot x + \xi\cdot \opp\)}\d y \d\xi
	\end{equation}
	by analogy with the Fourier inversion formula. Here, $(y,\xi)\in\Rdd$ is the phase space variable, $\opp = -i\hbar\nabla$ is the quantum analogue of the momentum, $x$ is the operator of multiplication by the variable $x$, and $\widehat{f}$ denotes the Fourier transform of $f$, defined by
	\begin{equation*}
		\F{f}\!(y,\xi) = \widehat{f}(y,\xi) = \iintd f(y',\xi')\, e^{-2i\pi\(y\cdot y'+\xi\cdot\xi'\)}\d y'\d \xi'. 
	\end{equation*}
	An elementary computation (see e.g.~\cite[Chapter~2]{folland_harmonic_1989}) shows that $\op_f$ is the operator with integral kernel
	\begin{equation*} %\label{eq:Weyl_def}
		\op_f(x,y) = \intd e^{-2i\pi\(y-x\)\cdot\xi} \, f(\tfrac{x+y}{2},h\xi)\d\xi.
	\end{equation*}
	Another widely used object is the Wigner transform~\cite{wigner_quantum_1932}, which is the inverse operation: it associates to an operator $\op$ a function on the phase space
	\begin{equation*} %\label{eq:Wigner_def}
		f_{\op}(x,\xi) := \intd e^{-i\,y\cdot\xi/\hbar} \,\op(x+\tfrac{y}{2},x-\tfrac{y}{2})\d y.
	\end{equation*}
	One can then verify that $\op_{f_{\op}} = \op$. 
	In general, these pointwise identities should be understood in the sense of distributions. The Wigner transform verifies $\intd f_{\op}\d \xi = \rho(x)$ where $\rho$ was defined by Equation~ \eqref{eq:spatial_density_quantum}, and so in particular 
	\begin{equation}\label{eq:Wigner_mass}
		\iint f_{\op}\d x\d\xi = \intd \rho(x)\d x = \hd\Tr{\op} = 1.
	\end{equation}
	The Wigner transform of a density operator is always a real-valued function, but it is not in general a nonnegative function. It is however also possible to associate to any density operator $\op$ a probability distribution on the phase space called the Husimi transform of $\op$, and defined by
	\begin{equation*}
		\tilde{f}_{\op} := g_h * f_{\op} \geq 0
	\end{equation*}
	with $g_h(z) = \(\pi\hbar\)^{-3} e^{-\n{z}^2/\hbar}$ for $z\in\Rdd$. Similarly, the Weyl quantization of a nonnegative function $f$ of the phase space is always self-adjoint but not always nonnegative, and one can define an associated Toeplitz operator, also called (anti)-Wick quantization (see e.g.~\cite{lerner_feffermanphong_2007}) or averaging of coherent states (see e.g.~\cite{lions_sur_1993}) by
	\begin{equation*}
		\widetilde{\op}_f := \intdd f(z) \, \op_{g_h(z-\cdot)} \d z = \op_{\tilde{f}}
	\end{equation*}
	where $\tilde{f} = g_h * f$. It can be seen as a quantum convolution by a Gaussian converging to $\delta_0$ (see e.g.~\cite{lafleche_quantum_2022}).
	
	The Wigner transform of density operators being in general not nonnegative, the first equality in Equation~\eqref{eq:Wigner_mass} should in general be interpreted in a weak sense since $f_{\op}$ might not be integrable. However, $f_{\op}$ is always square integrable and the following identity holds
	\begin{equation*}
		\Nrm{f_{\op}}{L^2(\Rd)}^2 = \hd \Tr{\op^2},
	\end{equation*}
	where the right-hand side, a scaled squared Hilbert--Schmidt norm, is always finite since $\op$ is trace-class. We refer the reader to \cite{folland_harmonic_1989, lions_sur_1993} for more properties on the Wigner and Husimi transforms.
	
	In general, we will define the semiclassical generalization of the Lebesgue norms on the phase space in terms of scaled Schatten norms for $p\in[1,\infty)$ by
	\begin{align}\label{eq:def_norm}
		\Nrm{\op}{\L^p} := h^{\frac{3}{p}} \Nrm{\op}{p} = \(\hd \Tr{\n{\op}^p}\)^\frac{1}{p}.
	\end{align}
	and by setting $\Nrm{\op}{\L^\infty} = \Nrm{\op}{\infty}$ to be the operator norm when $p=\infty$. We will also denote the space of density operators with $n$ bounded velocity moments as follows
	\begin{equation*}
		\opP_n := \set{\op\in\opP, \hd\Tr{\n{\opp}^n\op} < \infty}.
	\end{equation*}

\subsection{Quantum optimal transport pseudometrics}

	Our result follows the strategy introduced by Golse and Paul in~\cite{golse_schrodinger_2017} to use a semiclassical analogue of the Wasserstein distance. One first defines the set of couplings of a probability density $f$ and a trace-class operator $\op$ as the set of measurable maps $(x,\xi)\mapsto \opgam(x,\xi)$ from ${\Rdd}$ to the space of trace-class operators such that
	$$
	\Tr{ \opgam(x,\xi)}=f(x,\xi)\quad \text{for a.e. }(x,\xi) \in \Rdd,\qquad \int_{\Rdd}\opgam(x,\xi) \d x \d \xi =\op.
	$$
	The set of all such couplings is denoted by $\cC(f,\op)$, and the pseudometric is then defined as follows:
	\begin{equation}\label{eq:Wh}
		\Wh(f,\op)^2 := \inf_{\opgam\in\cC(f,\op)} \intdd \hd\Tr{\opc(z)\,\opgam(z)} \d z
	\end{equation}
	with $z=(y,\xi)$ and $\opc(z) = \n{y-x}^2 + \n{\xi-\opp}^2$. The term pseudometric is chosen because these quantities are not distances, as they can never be $0$ since they verify
	\begin{equation*}
		\Wh(f,\op)^2 \geq 3\,\hbar.
	\end{equation*}
	However, they still behave similarly as distances when $\hbar\to 0$. In particular, as proved in~\cite[Theorem~4.2]{golse_quantum_2021}, the following triangle inequality holds:
	\begin{align*}
		\Wh(f,\op) &\leq W_2(f,g) + \Wh(g,\op).
	\end{align*}
	Using the previously defined Husimi transforms, these pseudometrics can also be compared with the classical Monge--Kantorovich--Wasserstein distances. It was proved in~\cite[Theorem~2.4]{golse_schrodinger_2017} that
	\begin{equation}\label{eq:Wh_bound_below}
		\Wh(f,\op)^2 \geq W_2(f,\tilde{f}_{\op})^2 - 3\,\hbar
	\end{equation}
	while in the special case of Toeplitz operators
	\begin{equation*}
		\Wh(f,\tilde{\op}_g)^2 \leq W_2(f,g)^2 + 3\,\hbar.
	\end{equation*}
	In the case of more general operators, it was proved in~\cite{lafleche_quantum_2023} that
	\begin{equation}\label{eq:Wh_bound_above}
		\Wh(f,\op) \leq W_2(f,\tilde{f}_{\op}) + \sqrt{3\,\hbar + \hbar^2 \,\sNrm{\nabla f_{\sqrt{\op}}}{L^2(\Rdd)}^2}.
	\end{equation}
	This implies in particular that, at least for sufficiently smooth operators, $\Wh(f,\op)$ is of the same order as $W_2(f,\tilde{f}_{\op})$ up to a term of order $\sqrt{\hbar}$. Actually, the quantity $\sqrt{\hbar}\, \sNrm{\nabla f_{\sqrt{\op}}}{L^2(\Rdd)}$ is bounded uniformly in $\hbar\in(0,1)$ even for states converging to discontinuous functions of the phase space, such as spectral functions of the form $\op = \indic_{\R_-}(-\hbar^2\Delta + U(x))$ for some smooth and confining potential $U$ (see e.g.~\cite{fournais_optimal_2020, deleporte_universality_2021, lafleche_optimal_2023}). 
	
\section{Main results}

	We compare solutions of the Vlasov and Hartree equations with bounded densities, that is $\rho_f,\rho\in L^\infty_\loc(\R_+,L^\infty(\Rd))$. To guarantee $\rho_f\in L^\infty_\loc(\R_+,L^\infty(\Rd))$, we can rely on the following classical result from~\cite{pallard_moment_2012}, where $\P(\Rdd)$ denotes the space of probability measures on $\Rdd$.
	\begin{theorem}[\cite{pallard_moment_2012}, Theorem~2]\label{thm:pallard}
		Given $k>2,$ let $f^\init\in \P\cap L^\infty(\Rdd)$ be a non-negative initial datum satisfying the following two assumptions
		\begin{equation*}
			\intdd \n{\xi}^k f^\init(x,\xi) \d x \d\xi < \infty,
		\end{equation*}
		and for any $R>0$ and $T>0$ it holds
		\begin{equation*}
			\supess \{f^\init(y-t\,\xi,w): \n{y-x} \leq Rt^{3/2}, \n{w-\xi}\leq Rt^{1/2}\}\in L^\infty([0,T]\times \Rd_x; L^1(\Rd_\xi)).
		\end{equation*}
		%{\color{red} It was written $Rt^{3/2}$ qnd $Rt^{1/2}$ before?}
		Then, there exists a unique weak solution $f\in C^0(\R_+; L^\infty(\Rdd)-w*)$ to the Cauchy problem for the $3$-dimensional Vlasov--Poisson system such that, for any $T>0$, we have
		\begin{equation*}
			\sup_{t\in[0,T]} \Nrm{\rho_f(t, \cdot)}{L^\infty(\Rd)} < \infty.
		\end{equation*}
	\end{theorem}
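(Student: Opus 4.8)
The plan is to obtain existence through a standard regularization-and-compactness scheme and uniqueness from Loeper's stability estimate~\cite{loeper_uniqueness_2006} (or its quantitative improvement~\cite{iacobelli_new_2022}), which applies precisely in the class of solutions with bounded density; the whole difficulty is therefore concentrated in the a priori bound $\sup_{t\in[0,T]}\Nrm{\rho_f(t)}{L^\infty(\Rd)}<\infty$. First I would mollify both the Coulomb kernel and the initial datum, producing global smooth solutions $f^\eps$ transported by well-defined characteristics solving $\dot X=V$, $\dot V=E_{f^\eps}(t,X)$, and establish every estimate below uniformly in $\eps$.

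The core is a self-consistent bound on $M(t):=\sup_{s\in[0,t]}\Nrm{\rho_f(s)}{L^\infty(\Rd)}$, resting on two inputs. The \emph{field estimate}: writing $E_f=-\nabla K*\rho_f$ with $\n{\nabla K}\lesssim\n{x}^{-2}$ in dimension three, splitting the convolution at radius $(\Nrm{\rho_f}{L^1}/\Nrm{\rho_f}{L^\infty})^{1/3}$ and using the conserved mass $\Nrm{\rho_f(t)}{L^1(\Rd)}=1$ gives $\Nrm{E_f(t)}{L^\infty(\Rd)}\lesssim\Nrm{\rho_f(t)}{L^\infty(\Rd)}^{2/3}$, hence $\sup_{[0,t]}\Nrm{E_f}{L^\infty(\Rd)}\leq C\,M(t)^{2/3}=:\Eps(t)$. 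The \emph{deviation estimate}: integrating the characteristic ODEs, the backward flow from $(x,\xi)$ at time $s$ down to time $0$ satisfies $\n{V_{0,s}-\xi}\leq\Eps(s)\,s$ and $\n{X_{0,s}-(x-s\xi)}\leq\tfrac12\Eps(s)\,s^2$. Plugging this into the representation $\rho_f(s,x)=\intd f^\init(X_{0,s},V_{0,s})\d\xi$, the point $(X_{0,s},V_{0,s})$ lies, for every $\xi$, in the tube $\{(y-s\xi,w):\n{y-x}\leq\tfrac12\Eps(s)s^2,\ \n{w-\xi}\leq\Eps(s)s\}$, so as soon as $\Eps(s)\,s^{1/2}\leq R$ the second hypothesis yields $\rho_f(s,x)\leq C_0$, with $C_0$ the $L^\infty_{s,x}L^1_\xi$ norm appearing there. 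Combining with the field estimate, $\Eps(s)\leq C\,C_0^{2/3}$, so the constraint $\Eps(s)\,s^{1/2}\leq R$ holds on a window $[0,\tau_*]$ whose length $\tau_*$ depends only on $R$ and $C_0$ — not on $\eps$ nor on $T$. Hence $M(\tau_*)\leq C_0$, and since the solution at time $\tau_*$ still satisfies the second hypothesis — with essentially the same constant, the flow on $[0,\tau_*]$ being a small perturbation of free transport — the same window can be reused, and the bound reaches any finite $T$ in finitely many steps.

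Once $\Nrm{\rho_f}{L^\infty(\Rd)}$, and therefore $\Nrm{E_f}{L^\infty(\Rd)}$, is controlled on $[0,T]$, I would propagate the velocity moment by differentiating $\intdd\n{\xi}^kf\,\d x\,\d\xi$ along the flow, obtaining $\sup_{[0,T]}\intdd\n{\xi}^kf<\infty$ uniformly in $\eps$ (this is where $k>2$ enters); interpolating against $\Nrm{f}{L^\infty}$ also gives $\rho_{f^\eps}\in L^\infty_\loc(\R_+;L^q(\Rd))$ with $q=(k+3)/3$. These uniform bounds — $L^\infty$ control of $f^\eps$ and $\rho_{f^\eps}$, velocity moments (hence tightness), and field regularity — allow, through a velocity-averaging / Aubin--Lions argument, to extract $f^\eps\rightharpoonup f$ weak-$*$ with $\rho_{f^\eps}\to\rho_f$ strongly enough to pass to the limit in the nonlinear term $E_{f^\eps}\cdot\Dv f^\eps$; the weak-$*$ continuity in time of the limit is then read off the equation, and uniqueness in the bounded-density class is Loeper's theorem.

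The genuine obstacle is the self-consistent density bound, with two interlocking points inside it: breaking the circular dependence ``$\rho_f$ controls $E_f$, $E_f$ controls the displacement, the displacement controls $\rho_f$'', which succeeds only because the powers $3/2$ and $1/2$ in the second hypothesis make the short-time window $\tau_*$ depend on the data through the harmless exponent $M^{2/3}$; and showing that the second hypothesis is propagated by the flow with a constant that does \emph{not} degrade from step to step — otherwise $\tau_*$ would shrink geometrically and an arbitrary horizon could not be reached — which is what globalizes the local estimate. By comparison, checking that the weak limit is genuinely transported by its own characteristics is a minor matter: once $\rho_f\in L^\infty(\Rd)$, the field $E_f$ is log-Lipschitz and the flow is well defined by Osgood's criterion, exactly as in Loeper's framework.
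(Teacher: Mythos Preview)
The paper does not prove this statement: Theorem~\ref{thm:pallard} is quoted as an external result from \cite{pallard_moment_2012} and used only as a black box to ensure that the classical density $\rho_f$ stays bounded, so that the quantity $C_\infty(t)$ in~\eqref{eq:Cinfty} is finite. There is therefore no proof in the paper to compare your proposal against.

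On the substance of your sketch, the field estimate $\Nrm{E_f}{L^\infty}\lesssim\Nrm{\rho_f}{L^\infty}^{2/3}$ and the characteristic deviation bounds are correct, and they are indeed the engine of Pallard's argument. The genuine gap is your globalization step. You restart the estimate at time $\tau_*$ by asserting that $f(\tau_*)$ ``still satisfies the second hypothesis with essentially the same constant, the flow on $[0,\tau_*]$ being a small perturbation of free transport''. This is not justified: the second hypothesis is a structural condition on $f^\init$ tied to the free-streaming geometry, and there is no mechanism offered for why the nonlinear flow would preserve it with a constant that does not deteriorate. If $C_0$ degrades at each restart, $\tau_*$ shrinks and you cannot reach an arbitrary $T$ --- you yourself flag this as the crucial point, but you do not actually address it. The way the argument in \cite{pallard_moment_2012} closes is not by iterating a short-time window but by a single continuity/bootstrap argument on $[0,T]$, exploiting that the hypothesis is assumed for \emph{every} $R>0$: one fixes $T$, lets $R$ be large (depending on $T$ and on a tentative bound for $M(T)$), and uses the sublinear exponent $2/3$ in the field estimate to show that the tentative bound improves itself, hence persists up to $T$. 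Your ingredients are right; the architecture of the closing step should be a bootstrap on $[0,T]$ referring only to $f^\init$, not an iteration that relies on an unproved propagation of the hypothesis.
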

	
	As was indicated in Section~\ref{sec:previous_results} in introduction, there is no global-in-time uniform-in-$\hbar$ known analogous estimate for the quantum position density $\rho$, but a possibly-$\hbar$-dependent estimate is available for all times for example in~\cite{illner_global_1994}, which can be stated in the following way.
	\begin{theorem}[\cite{illner_global_1994}, theorems~3.10 and~3.11]
		There exists a unique solution $\op\in C^0(\R_+,\opP\cap\opP_4)\cap C^1(\R_+,\opP)$ with initial data $\op^\init\in\opP\cap\opP_4$ to the Hartree equation~\eqref{eq:Hartree} with Coulomb or gravitational potential. It verifies for any $T>0$,
		\begin{equation*}
			\sup_{t\in[0,T]} \Nrm{\rho(t,\cdot)}{L^\infty(\Rd)} < \infty.
		\end{equation*}
	\end{theorem}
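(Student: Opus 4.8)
The plan is to follow the classical route for the Schr\"odinger--Poisson / Wigner--Poisson system (as in the works of Brezzi--Markowich, Illner--Lange--Zweifel and Castella cited above): prove a local‑in‑time existence statement by a fixed‑point argument, upgrade it to a global one using the conservation laws together with the propagation of the velocity moments $\hd\Tr{\n{\opp}^n\op}$, and read off the bound on $\rho$ at the end from a semiclassical Sobolev embedding. The starting observation is that, by Hardy's inequality, $V_{\op}=K*\rho$ is infinitesimally form‑bounded with respect to $-\hbar^2\Delta$ whenever $\rho\in L^1(\Rd)\cap L^\infty(\Rd)$; hence $H_{\op}=-\tfrac{\hbar^2}{2}\Delta+V_{\op}$ is self‑adjoint, generates a strongly continuous unitary group, and the Hartree flow is a conjugation by unitaries — so it preserves positivity and every scaled Schatten norm $\Nrm{\op}{\L^p}$, in particular the mass $\hd\Tr{\op}=1$ and (crucially, see below) the operator norm $\Nrm{\op}{\L^\infty}$.

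\emph{Local theory and moment propagation.} Freezing a potential $V$, the linear equation $i\hbar\,\dpt\op=\com{-\tfrac{\hbar^2}{2}\Delta+V,\op}$ is solved by $\op(t)=U_V(t)\,\op^\init\,U_V(t)^*$, and I would run a Picard iteration on $\op\mapsto V_{\op}=K*\rho\mapsto U_{V_{\op}}\op^\init U_{V_{\op}}^*$, proving it contracts, for $T$ small, in a ball of $C([0,T];\L^2)$ while propagating the class $\opP\cap\opP_4$ as an a priori bound that passes to the limit by weak compactness. For the contraction one uses that $\Nrm{V_{\op_1}-V_{\op_2}}{L^\infty}$ is controlled by Lebesgue norms of $\rho_1-\rho_2$, themselves controlled via the diagonal bound $\Nrm{\rho}{L^\infty}\lesssim\hd\Tr{(1+\n{\opp}^2)^2\op}$ coming from $\rho(x)=\hd\op(x,x)$ and $H^2(\Rd)\hookrightarrow L^\infty(\Rd)$. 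Propagation of $\opP\cap\opP_4$ rests on $\dt\,\hd\Tr{\n{\opp}^4\op}=\tfrac{i}{\hbar}\hd\Tr{\com{\n{\opp}^4,V_{\op}}\op}$ (valid since $\n{\opp}^2$ commutes with the Laplacian). Expanding the commutator yields terms $\opp^a\,(\nabla^b V_{\op})\,\opp^c$ with $1\le\n b\le3$ and $\n a+\n b+\n c=4$: the contributions from $\nabla V_{\op}=\nabla K*\rho$ and from $\Delta V_{\op}=-\kappa\rho$ are harmless, while the only delicate one is $\nabla^3 V_{\op}$, since $\nabla^3 K(x)\sim\n x^{-4}$ is more singular than a Calder\'on--Zygmund kernel. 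The remedy is to redistribute one derivative inside the trace — replacing multiplication by $\nabla^3 V_{\op}$ with the commutator $\tfrac{i}{\hbar}\com{\opp,\nabla^2 V_{\op}}$ — so that the worst term reduces to $\nabla^2 V_{\op}=\nabla^2 K*\rho$ (a Calder\'on--Zygmund transform of $\rho$, bounded in every $L^p$, $1<p<\infty$) against at most two extra factors of $\opp$, which a Cauchy--Schwarz splitting closes against $\big(\hd\Tr{\n{\opp}^4\op}\big)^{1/2}$ times \emph{low} Lebesgue norms of $\rho$ such as $\Nrm{\rho}{L^2}$ and $\Nrm{\rho}{L^3}$.

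\emph{Conservation laws and globalization.} Besides the Schatten norms, the energy $\mathcal E(\op)=\tfrac{\hbar^2}{2}\Tr{-\Delta\op}+\tfrac12\iint_{\Rd\times\Rd}K(x-y)\,\rho(x)\rho(y)\,\d x\,\d y$ is conserved. In the repulsive (Coulomb) case the potential energy is nonnegative, so energy conservation bounds the kinetic energy $\hd\Tr{-\hbar^2\Delta\op}$ for all times. In the attractive (gravitational) case one uses in addition the Lieb--Thirring‑type inequality $\Nrm{\rho}{L^{5/3}}^{5/3}\lesssim\Nrm{\op}{\L^\infty}^{2/3}\,\hd\Tr{-\hbar^2\Delta\op}$ — where the conservation of $\Nrm{\op}{\L^\infty}$ makes it applicable to a general, non‑fermionic state — together with the interpolation $\Nrm{\rho}{L^{6/5}}\lesssim\Nrm{\rho}{L^1}^{7/12}\Nrm{\rho}{L^{5/3}}^{5/12}$ and Hardy--Littlewood--Sobolev, which bounds the potential energy below by $-C\Nrm{\rho}{L^{6/5}}^2$; absorbing a fraction of the kinetic energy by Young's inequality again yields a global bound on $\hd\Tr{-\hbar^2\Delta\op}$. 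Since the low norms of $\rho$ appearing in the commutator estimate are, via $\rho=\sum_j\lambda_j\n{\psi_j}^2$, $H^1(\Rd)\hookrightarrow L^6(\Rd)$ and Lieb--Thirring, controlled by this globally bounded kinetic energy, the Gr\"onwall inequality for $\hd\Tr{\n{\opp}^4\op}$ has globally bounded coefficients, so this quantity cannot blow up in finite time; combined with the local theory this produces a global solution in $C^0(\R_+;\opP\cap\opP_4)\cap C^1(\R_+;\opP)$.

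\emph{Density bound, uniqueness, and the main obstacle.} The estimate $\sup_{[0,T]}\Nrm{\rho(t,\cdot)}{L^\infty(\Rd)}<\infty$ is then immediate from $\Nrm{\rho}{L^\infty}\lesssim\hd\Tr{(1+\n{\opp}^2)^2\op}$ and the control of the moments of $\op$ on $[0,T]$. For uniqueness, given two solutions with the same datum $\op^\init=\sum_j\lambda_j\ket{\phi_j}\bra{\phi_j}$, I would write each as $\op_i(t)=\sum_j\lambda_j\ket{\psi_j^{(i)}(t)}\bra{\psi_j^{(i)}(t)}$ with $\psi_j^{(i)}$ solving the linear Schr\"odinger equation with potential $V_{\op_i}$, and track $D(t):=\sum_j\lambda_j\Nrm{\psi_j^{(1)}(t)-\psi_j^{(2)}(t)}{L^2}^2$: the common‑Hamiltonian part drops out of $\dt D$, and using $\Nrm{V_{\op_1}-V_{\op_2}}{L^6}\lesssim\Nrm{\rho_1-\rho_2}{L^{6/5}}\lesssim D^{1/2}$ together with the globally bounded $H^1$‑type norms of the $\psi_j^{(i)}$ gives $\n{\dt D}\lesssim C(T)\,D$, hence $D\equiv 0$ and $\op_1=\op_2$. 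I expect the crux of the whole argument to be the commutator estimate for $\com{\n{\opp}^4,V_{\op}}$: it forces the derivative‑redistribution trick, and it must be arranged so that only low norms of $\rho$ — hence only the globally controlled kinetic energy — and a strictly sublinear power of the fourth moment appear, for otherwise the Gr\"onwall inequality would be merely local in time.
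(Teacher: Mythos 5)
The paper offers no proof of this statement: it is quoted directly from Illner--Lange--Zweifel (theorems~3.10 and~3.11 of the cited reference), so there is no internal argument to compare against. Your sketch reconstructs essentially the strategy of that reference and of Castella's $L^2$ theory --- local fixed point for the self-consistent unitary propagator, conservation of mass, Schatten norms and energy, globalization via propagation of $\hd\Tr{\n{\opp}^4\op}$, and the final density bound from $\Nrm{\rho}{L^\infty}\lesssim_\hbar \hd\Tr{(1+\n{\opp}^2)^2\op}$ --- and the architecture is sound. Note that every constant you produce (the $\hbar^{-4}$ hidden in the semiclassical Sobolev step, the factor $\Nrm{\op}{\L^\infty}\leq h^{-3}$ in Lieb--Thirring) is allowed to depend on $\hbar$ here; this is consistent with the paper, which explicitly frames this theorem as the non-uniform statement, uniformity in $\hbar$ being the content of the separate Theorem~2.3.

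The one step where your sketch is weaker than it claims is the commutator estimate. By normal-ordering $\com{\n{\opp}^4,V_{\op}}$ into terms $\opp^a(\nabla^b V_{\op})\opp^c$ with $\n{b}$ up to $3$ (in fact the fully ordered expansion also contains an $\hbar^4\nabla^4V_{\op}$ term, i.e.\ two derivatives of $\rho$, which no moment of $\op$ controls), you manufacture singular terms and must then undo the damage with the $\com{\opp,\nabla^2V_{\op}}$ redistribution. Even after that, calling $\Delta V_{\op}=-\kappa\rho$ and $\nabla^2V_{\op}$ harmless hides a real issue: inside the trace these multiplication operators are paired against densities of derived operators such as $\opp\,\op\,\opp$, and closing by H\"older then demands integrability of $\rho$ (and of those derived densities) strictly above the $L^1\cap L^{5/3}$ that mass, Lieb--Thirring and the energy bound actually provide. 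The standard remedy --- the one used in the cited references and in~\cite{lafleche_propagation_2019} --- is never to expand past one derivative: writing $\com{\n{\opp}^4,V}=-i\hbar\(\n{\opp}^2(\opp\cdot\nabla V+\nabla V\cdot\opp)+(\opp\cdot\nabla V+\nabla V\cdot\opp)\n{\opp}^2\)$, only $\Nrm{\nabla V_{\op}}{L^\infty}\lesssim\Nrm{\rho}{L^1}+\Nrm{\rho}{L^{5/3}}$ enters, a quantity globally bounded by the conserved mass and kinetic energy, and Cauchy--Schwarz closes the Gr\"onwall inequality for $\hd\Tr{\n{\opp}^4\op}$ with a linear right-hand side. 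With that correction, the rest of your argument (globalization in both the repulsive and attractive cases, uniqueness via the orthonormal decomposition and $\Nrm{V_{\op_1}-V_{\op_2}}{L^6}\lesssim\Nrm{\rho_1-\rho_2}{L^{6/5}}$) goes through as in the literature.
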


	On the other hand, a finite-time uniform-in-$\hbar$ estimate can be found in \cite{lafleche_propagation_2019}. It gives the following.
	\begin{theorem}[\cite{lafleche_propagation_2019}, Theorem~5]\label{thm:moments}
		Let $\op$ be the solution to the Hartree equation~\eqref{eq:Hartree} as given in the previous theorem with initial condition $\op^\init\in\opP$ such that
		\begin{equation}\label{eq:condition_op_init}
			\hd\Tr{\n{\opp}^{16}\op^\init} \quad\text{ and } \quad \Nrm{\(1+\n{\opp}^4\)\op^\init}{\L^\infty}
		\end{equation}
		are bounded uniformly in $\hbar > 0$. Then there exists $T>0$ and $C>0$ independent of $\hbar$ such that
		\begin{equation*}
			\sup_{t\in[0,T]} \Nrm{\rho(t,\cdot)}{L^\infty(\Rd)} \leq C.
		\end{equation*}
	\end{theorem}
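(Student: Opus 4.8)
The plan is to transpose to the operator setting the Lions--Perthame propagation-of-moments scheme for the Vlasov--Poisson equation. Throughout, the manipulations below are justified by the existence, uniqueness and $C^1$ regularity of $\op$ from the previous theorem, together with the propagation of all polynomial velocity moments of $\op$ (possibly with $\hbar$-dependent bounds). The two quantities to be controlled \emph{uniformly in $\hbar$} are the scaled velocity moments $M_k(t) := \hd\Tr{\n{\opp}^k\op(t)}$ and the weighted operator norm $N(t) := \Nrm{\(1+\n{\opp}^4\)\op(t)}{\L^\infty}$. Note that all ordinary Schatten norms of $\op(t)$ --- in particular $M_0\equiv1$ and $\Nrm{\op(t)}{\L^\infty}$ --- are conserved, since $\op(t)$ is a unitary conjugate of $\op^\init$, so only $N$ and the higher $M_k$ genuinely need to be propagated. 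The argument then combines three ingredients: (i)~a semiclassical interpolation inequality for the density with $\hbar$-independent constant, of the form $\Nrm{\rho(t)}{L^\infty(\Rd)}\lesssim N(t)^{a}\,M_{16}(t)^{b}$ for suitable $a,b>0$ --- a quantum Gagliardo--Nirenberg--Sobolev estimate for the density of a density operator; (ii)~the Coulomb elliptic estimate $\Nrm{\nabla V_{\op}(t)}{L^\infty(\Rd)}\lesssim\Nrm{\rho(t)}{L^1}^{1/3}\Nrm{\rho(t)}{L^\infty}^{2/3}=\Nrm{\rho(t)}{L^\infty}^{2/3}$, which holds verbatim as in the classical case because $\rho(t)$ is a genuine probability density; and (iii)~differential inequalities for $M_{16}$ and $N$ driven by $\Nrm{\nabla V_{\op}}{L^\infty}$.

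For~(iii), I would differentiate along the flow using $i\hbar\,\dpt\op=\com{H_{\op},\op}$ and the cyclicity of the trace. Since $\n{\opp}^k$ is a function of $\opp$ alone it commutes with the kinetic operator $-\tfrac{\hbar^2}{2}\Delta$, so $\dt M_k=\tfrac{i}{\hbar}\hd\Tr{\com{\n{\opp}^k,V_{\op}}\op}$. Expanding the commutator by semiclassical symbolic calculus (or explicitly, $k=16$ being even) gives $\tfrac{1}{\hbar}\com{\n{\opp}^k,V_{\op}}=\nabla V_{\op}\cdot\big(\text{operator polynomial in }\opp\text{ of degree }k-1\big)+\hbar\,(\text{lower order})$; pairing the $\opp$-factors with $\op$ by Cauchy--Schwarz and interpolating the lower moments between $M_k$ and the conserved mass, one gets $\dt M_{16}\lesssim\Nrm{\nabla V_{\op}}{L^\infty}\(1+M_{16}\)+(\hbar\text{-corrections})$, the corrections being harmless and controlled by $\Nrm{\rho}{L^2}$ together with conserved quantities. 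The analogue for $N$ is obtained by differentiating the Schatten norms $\Nrm{\(1+\n{\opp}^4\)\op(t)}{\L^{2m}}$ for finite $m$ and passing to the limit $m\to\infty$ (or via the Duhamel formula); what one needs is a uniform-in-$\hbar$ bound, in operator norm, on the conjugated commutator $\tfrac{1}{\hbar}\com{1+\n{\opp}^4,V_{\op}}\(1+\n{\opp}^4\)^{-1}$ in terms of $\Nrm{\nabla V_{\op}}{L^\infty}$ and Schatten norms of $\op$.

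To close the loop, set $\Phi(t):=1+M_{16}(t)+N(t)$. Combining~(i) and~(ii), $\Nrm{\nabla V_{\op}(t)}{L^\infty}\lesssim\(N^aM_{16}^b\)^{2/3}\lesssim\Phi(t)^{2(a+b)/3}$, and then~(iii) yields $\dt\Phi\lesssim\Phi^{1+\delta}$ for some $\delta>0$; meanwhile the hypotheses~\eqref{eq:condition_op_init} guarantee $\Phi(0)<\infty$ uniformly in $\hbar$. Integrating this differential inequality produces $T>0$ and $C<\infty$, depending only on $\Phi(0)$, with $\sup_{[0,T]}\Phi\leq C$, hence $\sup_{[0,T]}\Nrm{\rho(t)}{L^\infty(\Rd)}\leq C$, both independent of $\hbar$. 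The sizeable power $k=16$ is dictated by how much is lost in the quantum interpolation~(i) and in the commutator estimates, and the strict positivity of $\delta$ is exactly what confines the conclusion to a finite interval --- just as the naive moment--Gronwall argument is only local even for classical Vlasov--Poisson, where the global control of moments rests on a considerably more delicate analysis along characteristics.

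The hard part will be the estimate in~(iii) for the weighted norm $N$. Because $\n{\opp}^4$ does not commute with $V_{\op}$, a crude expansion of $\com{\n{\opp}^4,V_{\op}}$ produces terms carrying up to four derivatives of $V_{\op}=K*\rho$, which for the Coulomb kernel are far too singular --- already $\nabla^2V_{\op}$ fails to lie in $L^\infty$. The remedy is to keep the commutator in a symmetrized, \emph{factored} form, each summand pairing a single derivative of $V_{\op}$ with factors of $\op$ and $\opp$, and to estimate $\nabla^2V_{\op}$ in $L^p$ by Calder\'on--Zygmund theory rather than in $L^\infty$, so that only $\Nrm{\nabla V_{\op}}{L^\infty}$ --- controlled by the density through~(ii) --- and conserved Schatten norms of $\op$ survive in the final bound, at the price of a possible logarithmic factor to be absorbed by Osgood's lemma in place of Gronwall's. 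A secondary difficulty is proving the interpolation inequality~(i) with a constant independent of $\hbar$, which requires a harmonic-analysis argument tailored to the density of a density operator.
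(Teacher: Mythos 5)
This statement is not proved in the paper at all: Theorem~\ref{thm:moments} is quoted from \cite{lafleche_propagation_2019} (Theorem~5 there), so there is no in-paper argument to compare yours against. Measured against the proof in that reference, your outline is the right one --- it is precisely the quantum Lions--Perthame scheme: propagate the scaled moment $M_{16}$ and the weighted norm $\Nrm{(1+\n{\opp}^4)\op}{\L^\infty}$, convert them into an $L^\infty$ bound on $\rho$ through an $\hbar$-uniform kinetic interpolation inequality, feed that into the Coulomb elliptic estimate for $\nabla V_{\op}$, and close a local-in-time differential inequality. The reasons the conclusion is only local in time, and the role of the hypotheses~\eqref{eq:condition_op_init}, are also correctly identified.

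That said, what you have written is a roadmap rather than a proof: both load-bearing lemmas are asserted, not established, and you flag this yourself. The $\hbar$-uniform interpolation in~(i) and, above all, the propagation of the weighted $\L^\infty$ norm in~(iii) are exactly where the content of the cited theorem lives, and where the exponents $16$ and $4$ come from. One factual correction: expanding $\com{\n{\opp}^4, V_{\op}}$ by the Leibniz rule for commutators, $\com{AB,V} = A\com{B,V}+\com{A,V}B$ with $\com{\opp_j,V} = -i\hbar\,\partial_j V_{\op}$, produces terms each carrying exactly \emph{one} factor of $\nabla V_{\op}$ and three remaining factors of $\opp$ --- never four derivatives of $V_{\op}$. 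The second derivatives you worry about appear only at the next step, when one normal-orders, i.e.\ commutes $\nabla V_{\op}$ past the surviving $\opp$'s so as to factor out $(1+\n{\opp}^4)\op$; each such swap costs a power of $\hbar$ times $\nabla^2 V_{\op}$, which for the Coulomb kernel is a Calder\'on--Zygmund transform of $\rho$, bounded on $L^p$ but not on $L^\infty$. So your diagnosis of where the difficulty sits is correct even though the stated mechanism is not. Since the result is in any case imported from the literature, the honest verdict is: right strategy, consistent with the source, but not by itself a proof.
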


	Hence, under the conditions of the three above theorems, the following quantity is bounded for any time $t\geq 0$ and uniformly in $\hbar$ up to some time $T>0$
	\begin{equation}\label{eq:Cinfty}
		C_\infty(t) = \max\!\(1,\kappa\) \max\!\(1,\sNrm{\rho_{f(t)}}{L^\infty},\Nrm{\rho(t)}{L^\infty}\).
	\end{equation}
	
	We are now ready to state our main theorem.

	\begin{theorem}\label{thm:main}
		Let $f$ be a solution of the Vlasov equation~\eqref{eq:Vlasov} with initial condition $f^\init\in\P(\Rdd)$ verifying the hypotheses in Theorem~\ref{thm:pallard} and let $\op$ be a solution of the Hartree equation with initial condition $\op^\init\in\opP$ verifying the hypotheses in Theorem~\ref{thm:moments}. Then, for any $\eps\in(0,1)$ and any $t\geq 0$,
		\begin{equation*}
			\Wh(f(t),\op(t)) < C_\infty^{1/6}  \(C_\infty^\init\)^{\frac{1}{3}} \max\!\(\tfrac{5}{\eps}\, \Wh(f^\init,\op^\init)^{1-\eps}, 3\, \Wh(f^\init,\op^\init)\) e^{\Lambda_\eps(t)}
		\end{equation*}
		where $C_\infty$ is given by Equation~\eqref{eq:Cinfty}, $C_\infty^\init = C_\infty(0)$ and
		\begin{align*}
			\Lambda_\eps(t) &= 2\, \Lambda(t) + \frac{\(1-\eps\)\Lambda(t)^2}{\eps} &\text{ with } && \Lambda(t) &= \int_0^t \sqrt{C_\infty(s)} \d s.
		\end{align*}
	\end{theorem}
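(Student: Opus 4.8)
The plan is to differentiate in time a well-chosen functional built from the pseudometric $\Wh(f(t),\op(t))$, following the Golse--Paul coupling method as adapted to singular potentials in~\cite{lafleche_propagation_2019}, but then to close the Grönwall-type inequality using the refined nonlinear argument of~\cite{iacobelli_new_2022} rather than a naive linear absorption of the potential-difference term.

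First I would fix, for each $t$, a near-optimal coupling $\opgam(t)\in\cC(f(t),\op(t))$ and transport it along the coupled flow: a natural choice is to push $\opgam^\init$ forward by the Hamiltonian flow of $f$ acting on the classical variable $z=(y,\xi)$ and by the Hartree evolution acting on the operator slot, so that at time $t$ one gets an admissible (though not optimal) coupling whose cost upper-bounds $\Wh(f(t),\op(t))^2$. Differentiating $\intdd \hd\Tr{\opc(z)\opgam(t,z)}\d z$ along this dynamic coupling produces three kinds of terms: (i) the free-transport terms involving $\xi\cdot\Dx$ and the kinetic part $-\tfrac{\hbar^2}{2}\Delta$, which after integration by parts and the elementary commutator computation $[\,\n{\xi-\opp}^2,\ -\tfrac{\hbar^2}{2}\Delta\,]$ combine into a term controlled by the cost itself, giving the benign factor $2\sqrt{C_\infty}$; (ii) a purely quantum remainder of size $O(\hbar)$ coming from the non-commutativity (the $3\hbar$ floor and the $\hbar^2\sNrm{\nabla f_{\sqrt{\op}}}{L^2}^2$ correction in~\eqref{eq:Wh_bound_above} are the traces of this); and (iii) the interaction term, which is the difference of the two force fields $E_f(y)-E_{\rho}(x)$ tested against $\xi-\opp$ and integrated against $\opgam$. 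Everything hinges on step (iii).

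The hard part will be bounding the force-difference term. Splitting $E_f(y)-E_\rho(x) = (E_f(y)-E_f(x)) + (E_f(x)-E_\rho(x))$ (reading $E_\rho$ via the Husimi density so that it is a genuine classical force field), the first piece is Lipschitz-type only in a weak, density-weighted sense — for the Coulomb kernel $\nabla^2 K$ is not bounded — and the second is the potential generated by $\rho_f-\tilde\rho_\op$. The classical Loeper estimate would bound $\Nrm{E_f-E_{\tilde\rho_\op}}{L^2}$ by $\sqrt{C_\infty}\,W_2(f,\tilde f_\op)$, which combined with~\eqref{eq:Wh_bound_below}--\eqref{eq:Wh_bound_above} and Cauchy--Schwarz against $\opc$ closes a linear Grönwall loop and yields the classical double-exponential rate $\hbar^{\exp(-\lambda t)}$. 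To get the enhanced rate I would instead import the sharpened inequality of~\cite{iacobelli_new_2022}: the relevant quantity is not estimated purely linearly but with a logarithmic self-improvement, roughly $\Nrm{E_f-E_{\tilde\rho_\op}}{L^2}^2 \lesssim C_\infty\, d^2\,\big(1+\log_-(d)\big)$ with $d$ the transport distance, so that the differential inequality for $D(t):=\Wh(f(t),\op(t))^2$ (up to the additive $\hbar$-corrections) takes the form $D' \le 2\sqrt{C_\infty}\,D + C\sqrt{C_\infty}\,D\,(1+\log_-D)$. Integrating this Bernoulli/Osgood-type inequality is what produces the quadratic-in-$\Lambda$ exponent $\Lambda_\eps(t)=2\Lambda(t)+\tfrac{(1-\eps)\Lambda(t)^2}{\eps}$ and the $\Wh(f^\init,\op^\init)^{1-\eps}$ loss at the initial time, after absorbing the $\eps$-dependent constants and the prefactors $C_\infty^{1/6}(C_\infty^\init)^{1/3}$ coming from how the $\hbar$-error terms and the density bounds enter.

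Concretely the steps are: (1) set up the dynamic coupling and write the time derivative of its cost; (2) handle terms (i)--(ii), using the commutator identities for $-\tfrac{\hbar^2}{2}\Delta$ and the quantum integration-by-parts, to reduce to $D'\le 2\sqrt{C_\infty}\,D + (\text{interaction}) + O(\hbar$-terms$)$; (3) estimate the interaction term by inserting the Husimi transform, applying the refined potential-difference estimate from~\cite{iacobelli_new_2022} together with Cauchy--Schwarz in the operator trace and the bounds~\eqref{eq:Wh_bound_below}, \eqref{eq:Wh_bound_above} relating $\Wh$ to $W_2(f,\tilde f_\op)$; (4) integrate the resulting nonlinear ODE inequality, choosing the free parameter to produce the $\eps$-family of bounds; (5) take the infimum over couplings to pass from the dynamic-coupling cost back to $\Wh$, and unwind the constants. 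The one genuinely delicate point — beyond bookkeeping of the $\hbar$-error terms — is making the logarithmic-improvement estimate of~\cite{iacobelli_new_2022}, which was designed for classical Wasserstein distances between two measures, interact correctly with the semiclassical coupling: one must verify that the marginal $\tilde f_\op$ of the dynamic coupling genuinely has the bounded density $C_\infty$ and that the $\log_-$ nonlinearity survives the extra $\sqrt\hbar$ gap between $\Wh(f,\op)$ and $W_2(f,\tilde f_\op)$ without destroying the estimate when $\Wh(f,\op)$ is already of order $\sqrt\hbar$.
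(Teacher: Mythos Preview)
Your proposal has a genuine gap at its core: the differential inequality you write down,
\[
D' \le 2\sqrt{C_\infty}\,D + C\sqrt{C_\infty}\,D\,(1+\log_-D),
\]
does \emph{not} integrate to the improved rate. An Osgood inequality of the form $D'\le C\,D\,|\ln D|$ gives $|\ln D(t)|\ge e^{-Ct}|\ln D(0)|$, i.e.\ $D(t)\le D(0)^{\exp(-Ct)}$, which is precisely the old double-exponential deterioration you are trying to avoid. To land on $\Lambda_\eps(t)=2\Lambda(t)+\tfrac{(1-\eps)}{\eps}\Lambda(t)^2$ you need an inequality of the shape $\partial_t Q\le C\,Q\,\sqrt{|\ln Q|}$, and nothing in your outline produces the square root.

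The mechanism that does produce it, and that you have not imported from~\cite{iacobelli_new_2022}, is the \emph{anisotropic} cost $Q_\lambda=\lambda(t)\,Q_X+Q_P$ with a time-dependent weight $\lambda$ between the position and momentum parts of $\opc(z)$. Differentiating $Q_\lambda$ gives, after the Cauchy--Schwarz steps you describe,
\[
\partial_t\!\left(\tfrac{Q_\lambda}{\lambda}\right)\;\lesssim\;\tfrac{Q_\lambda}{\lambda}\left(\sqrt{\lambda}+\frac{C_\infty\,(1+|\ln(Q_\lambda/\lambda)|)}{\sqrt{\lambda}}\right),
\]
and it is the freedom to optimize in $\lambda$, choosing $\lambda\sim C_\infty|\ln Q|$, that converts the factor $(1+|\ln Q|)$ into $\sqrt{1+|\ln Q|}$. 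Working only with the isotropic $D=Q_X+Q_P$ throws away exactly this degree of freedom. Two smaller points: the log-Lipschitz estimate for the Coulomb force gives $\int|E(x)-E(y)|^2\gamma\lesssim C_\infty^2\,Q_X\,(c+|\ln Q_X|)^2$, not the linear-in-log bound you quote; and you do not need the Husimi detour via~\eqref{eq:Wh_bound_below}--\eqref{eq:Wh_bound_above} at all, since the Loeper term is controlled directly by $W_2(\rho_f,\rho)\le\Wh(f,\op)$ on the \emph{position} marginals (your worry about the extra $\sqrt{\hbar}$ gap then disappears).
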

	
	\begin{remark}\label{rmk:h bar}
		The previous theorem gives an almost-optimality of the rate of convergence in $\hbar$ for the semiclassical limit in the following sense. Assuming that initially $\Wh(f^\init,\op^\init) \leq C\,\sqrt{\hbar} \leq 1$ and fixed $\eps>0$, it yields for any time $t\in[0,T]$ such that $C_\infty\in L^\infty[0,T]$ uniformly-in-$\hbar$, 
		\begin{equation*}
			\sqrt{3\,\hbar} \leq \Wh(f(t),\op(t)) \leq C_{T,\eps} \,\sqrt{\hbar}^{1-\eps}
		\end{equation*}
		for some constant $C_{T,\eps}$ independent of $\hbar$. More precisely, it follows from the proof that for $T>0$ sufficiently small, there exists a constant $C>0$ independent of $\hbar$ such that
		\begin{equation*}
			\sqrt{3\,\hbar} \leq \Wh(f(t),\op(t)) \leq C_T \,\sqrt{\hbar} \, e^{C\sqrt{\n{\ln \hbar}}}.
		\end{equation*}
	\end{remark}
	
	\begin{remark}
		Thank to the above inequalities~\eqref{eq:Wh_bound_below} and~\eqref{eq:Wh_bound_above}, the previous theorem yields the following convergence of the Husimi transform in the classical Wasserstein distances. Assuming $W_2(f,\tilde{f}_{\op}) \leq 1$ and $\sqrt{3\,\hbar}\leq 1$, then
		\begin{equation*}
			W_2(f,\tilde{f}_{\op}) <  \tfrac{C\,e^{\Lambda_\eps(t)}}{\eps}\( W_2(f^\init,\tilde{f}_{\op^\init})^{1-\eps} + \hbar^\frac{1-\eps}{2} + \hbar^{1-\eps} \,D_{\op^\init}\)
		\end{equation*}
		where $C = 5\,\sqrt{3}\,C_\infty^{1/6}  \(C_\infty^\init\)^{\frac{1}{3}}$ and $D_{\op} = \sNrm{\nabla f_{\sqrt{\op}}}{L^2(\Rdd)}$. In particular, if $f^\init$ is such that $W_2(f^\init,\tilde{f}_{\op^\init}) \leq \hbar^\frac{1-\eps}{2}$, then, as proved in \cite{lafleche_quantum_2023}, the convergence is of order $\hbar^\frac{1-\eps}{2}$ for typical states such as ground states of systems of Fermions without interaction at zero or positive temperature.
	\end{remark}

\section{Improved Loeper estimates}

	\subsection{Heuristics in the classical case}
	
	To motivate the proof of our main result, we first revisit briefly the argument introduced in~\cite{iacobelli_new_2022} and that we will adapt below to the quantum case. 
	
	\begin{prop}
		Let $f_1$ and $f_2$ be two solutions of the Vlasov--Poisson equation~\eqref{eq:Vlasov} with initial data $f_1^\init, f_2^\init\in\P(\Rdd)$ verifying the hypotheses in Theorem~\ref{thm:pallard} and such that $W_2(f_1,f_2) \leq \eta \in (0,1)$. Then there exists $T>0$ and $C>0$ independent of $\eta$ such that for any $t\in[0,T]$,
		\begin{equation}\label{eq:classical_stability}
			W_2(f_1,f_2) \leq \sqrt{\eta} \,e^{C\sqrt{\n{\ln \eta}}}.
		\end{equation}
	\end{prop}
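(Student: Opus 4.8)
The plan is to run a Gr\"onwall argument on the squared $2$-Wasserstein distance between the two solutions, but to track more carefully than Loeper the dependence of the growth rate on the distance itself, which is the idea introduced in~\cite{iacobelli_new_2022}. Concretely, let $\pi(t)$ be an optimal transport plan between $f_1(t)$ and $f_2(t)$ realized through the characteristic flows $(X_1,\Xi_1)$ and $(X_2,\Xi_2)$ of the two Vlasov--Poisson solutions, and set $Q(t) := \iint (\n{X_1-X_2}^2 + \n{\Xi_1-\Xi_2}^2)\d\mu$, where $\mu$ is the (time-independent) initial coupling; then $W_2(f_1,f_2)^2 \le Q(t)$. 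Differentiating in time, the position part produces a cross term bounded by $Q$, and the velocity part produces the term $\iint (\Xi_1-\Xi_2)\cdot(E_{f_1}(X_1) - E_{f_2}(X_2))\d\mu$. I would split this force difference as $E_{f_1}(X_1) - E_{f_1}(X_2)$ plus $E_{f_1}(X_2) - E_{f_2}(X_2)$. The first piece is handled by the bound on $\sNrm{\rho_{f_1}}{L^\infty}$ from Theorem~\ref{thm:pallard}, which controls $\nabla E_{f_1}$ in a suitable weak/$\log$-Lipschitz sense, giving a contribution of order $Q\,\n{\ln Q}$.

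The second piece, $E_{f_1}(X_2) - E_{f_2}(X_2)$ integrated against the velocity difference, is the heart of the matter: one uses the stability estimate for the Poisson equation (Loeper's functional inequality), namely $\sNrm{E_{f_1} - E_{f_2}}{L^2} \lesssim \max(\sNrm{\rho_{f_1}}{L^\infty},\sNrm{\rho_{f_2}}{L^\infty})^{1/2}\, W_2(\rho_{f_1},\rho_{f_2}) \le C_\infty^{1/2}\, W_2(f_1,f_2)$, together with $W_2(\rho_{f_1},\rho_{f_2}) \le W_2(f_1,f_2) \le Q^{1/2}$. Combining, one arrives at a differential inequality of the form $Q' \le C(Q + Q\,\n{\ln Q})$ as long as $Q \le 1$, i.e. $\tfrac{\d}{\d t}\n{\ln Q} \ge -C(1 + \n{\ln Q})$, which upon integration yields $\n{\ln Q(t)} \ge e^{-Ct}(\n{\ln Q(0)} + 1) - 1$, hence $Q(t) \le Q(0)^{e^{-Ct}} \cdot e^{\text{(bounded)}}$. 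Choosing $T$ small enough that $e^{-CT} \ge 1/2$ then gives $W_2(f_1,f_2)(t)^2 \le Q(t) \le \eta^{2\,e^{-Ct}} e^{C'} \le \eta\, e^{C'}$, and taking square roots while absorbing the constant into the exponential via $e^{C'/2} \le e^{C''\sqrt{\n{\ln\eta}}}$ for $\eta$ bounded away from $1$ (or simply noting $e^{C'/2}$ is a constant and the stated bound is weaker) produces~\eqref{eq:classical_stability}.

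I would expect the main obstacle to be the rigorous justification of the characteristic-flow representation and the log-Lipschitz estimate for the force field: a priori $f_1,f_2$ are only bounded-density weak solutions (in the class of Theorem~\ref{thm:pallard}), so one must either work with optimal transport plans and DiPerna--Lions/Ambrosio flow theory, or invoke that bounded density implies the force $E_{f_i}$ is $\log$-Lipschitz (Osgood), which gives uniqueness of the flow and the needed Gr\"onwall estimate. The bookkeeping that turns the Osgood modulus $s\n{\ln s}$ into the precise $\sqrt{\n{\ln\eta}}$ rate — rather than Loeper's $\eta^{\exp(-Ct)}$ — is exactly the refinement of~\cite{iacobelli_new_2022}: the point is that on a \emph{small} time interval the double-exponential collapses to a single exponential of the logarithm, and then $\eta^{e^{-Ct}} = \eta\cdot\eta^{e^{-Ct}-1}$ with the correction factor $\eta^{e^{-Ct}-1} = e^{(1-e^{-Ct})\n{\ln\eta}} \le e^{CT\n{\ln\eta}}$, which is $\le e^{C'\sqrt{\n{\ln\eta}}}$ only after one is slightly more careful — in fact one keeps the interval $[0,T]$ fixed and absorbs the resulting constant, so the genuinely delicate point is simply ensuring all constants are independent of $\eta$, which follows because $C_\infty$ depends only on the uniform-in-time density bounds furnished by Theorem~\ref{thm:pallard}, not on $\eta$.
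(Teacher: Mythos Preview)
Your argument is Loeper's: you reach $Q'\le C\,Q\,(1+\n{\ln Q})$, integrate to $Q(t)\le Q(0)^{e^{-Ct}}e^{C'}$, and then restrict to $T$ with $e^{-CT}\ge\tfrac{1}{2}$ to obtain $W_2(t)\le e^{C'/2}\sqrt{\eta}$. Read literally this does yield~\eqref{eq:classical_stability} (at least once $\eta$ is bounded away from~$1$), but it is not the paper's route and it bypasses exactly the idea the proposition is written to motivate.

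The paper does not restrict to short time. It introduces a weight $\lambda(t)>0$, sets $Q_\lambda=\lambda\,Q_{X^2}+Q_{P^2}$ and $Q:=Q_\lambda/\lambda$; the same force estimates you invoke then give
\begin{equation*}
	\dpt Q \le 2\,Q\(\sqrt{\lambda} + \frac{C_T}{\sqrt{\lambda}}\,\weight{\ln Q}\),
\end{equation*}
and one \emph{optimizes} over $\lambda$ by choosing $\lambda=C_T\weight{\ln Q}$. This produces the improved inequality $\dpt Q\le 4\,Q\sqrt{C_T\weight{\ln Q}}$ --- note the square root on the logarithm, absent from your Osgood inequality --- and hence $Q(t)\le Q(0)\,e^{2Ct\sqrt{\n{\ln Q(0)}}}$ for all $t$ up to the exit time $Q=1$. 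For fixed $t$ this is strictly sharper than $Q(0)^{e^{-Ct}}$ as $Q(0)\to 0$, and it is this $\lambda$-optimization that is then transferred to the semiclassical pseudometric in Lemmas~\ref{lem:diff_ineq}--\ref{lem:gronwall_ineq} to obtain the almost-optimal rate of Theorem~\ref{thm:main}.

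Your closing paragraph shows where the understanding breaks down: you correctly compute that Loeper's correction factor $\eta^{e^{-Ct}-1}=e^{(1-e^{-Ct})\n{\ln\eta}}$ is of order $e^{c\,\n{\ln\eta}}$ for fixed $t>0$, \emph{not} $e^{C\sqrt{\n{\ln\eta}}}$, and then assert it can be ``absorbed into a constant''. It cannot, since it blows up as $\eta\to 0$. The $\sqrt{\n{\ln\eta}}$ rate does not come from bookkeeping on Loeper's ODE; it comes from the weighted functional and the optimization in~$\lambda$.
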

	
	\begin{remark}
		Notice that Equation~\eqref{eq:classical_stability} can also be written $W_2(f_1,f_2) \leq \eta^\frac{1-\theta(\eta)}{2}$ where $\theta(\eta) = C_T/\sqrt{\n{\ln \eta}}$ is small when $\eta$ is small.
	\end{remark}
	
	\begin{proof}[Sketch of the proof]
		For some coupling $\gamma_0$ of two solutions $f_1$ and $f_2$ of the Vlasov--Poisson equation and for $X_k(t)$ and $P_k(t)$ solutions of the Newton's equations $\dot{X}_k(t) = P_k(t)$ and $\dot{P}_k(t) = E_{f_k}(X_k)(t)$ with initial conditions $(X_k(0),P_k(0)) = (x_0,p_0)$, we define $Q_{X^2} = \int_{\R^{12}} \n{X}^2\gamma_0(\d z_1\d z_2)$, $Q_{X\cdot P} = \int_{\R^{12}} X\cdot P\,\gamma_0(\d z_1\d z_2)$ and $Q_{P^2} = \int_{\R^{4d}} \n{P}^2\gamma_0(\d z_1\d z_2)$ with $X = X_1-X_2$, $P = P_1-P_2$ and $E = E_{f_1}(X_1)-E_{f_2}(X_2)$. Then defining $Q_\lambda := \lambda\,Q_{X^2} + Q_{P^2}$ for some time dependent function $\lambda$, we get 
		\begin{align*}
			\dpt Q_\lambda =  \lambda'\,Q_{X^2} + 2\,\lambda\,Q_{X\cdot P} + 2\,Q_{P\cdot E}.
		\end{align*}
		where $\lambda'= \dt \lambda$. 
		Now, as shown in \cite[Section 3.3]{iacobelli_new_2022}, using the Loeper-type estimate 
		\begin{equation*}
			Q_{E^2} \leq C_T\,Q_{X^2} \weight{\ln Q_{X^2}}^2,\quad \text{ where } \weight{y}=\sqrt{1+y^2}
		\end{equation*}
		(compare also with the proof of Lemma~\ref{lem:log-lip_2} below), we can bound
		\begin{align*}
			\dpt Q_\lambda \leq  \lambda'\,Q_{X^2} + 2 \,Q_{P^2}^{1/2}\,Q_{X^2}^{1/2}\(\lambda + C_T \weight{\ln Q_{X^2}}\).
		\end{align*}
		Hence since $Q_{P^2} \leq Q_\lambda$ and $Q_{X^2} \leq \frac{Q_\lambda}{\lambda}$, we obtain
		\begin{align*}
			\dpt Q_\lambda &\leq  \frac{\lambda'}{\lambda}\,Q_{\lambda} + 2 \,Q_{\lambda}\(\sqrt \lambda + \frac{C_T}{\sqrt \lambda} \weight{\ln(Q_\lambda/\lambda)}\).
		\end{align*}
		Now, slightly differently from \cite{iacobelli_new_2022}, rather than considering $Q_\lambda$ we consider  $Q := Q_\lambda/\lambda$. Then the bound above gives
		\begin{equation*}
			\dpt Q \leq 2 \,Q\(\sqrt \lambda + \frac{C_T}{\sqrt \lambda} \weight{\ln(Q)}\).
		\end{equation*}
		The idea now is to optimize with respect to $\lambda$ the term in parenthesis. This corresponds to choosing $\lambda$ as a function of $Q$, namely
		$\lambda = C_T \weight{\ln Q}$. As shown in \cite[Lemma 3.7]{iacobelli_new_2022} (see also the generalizations in  \cite[Section 4]{iacobelli_new_2022}), this choice of $\lambda(t)$ is indeed admissible as long as $Q(t)\leq 1$, and it leads to the bound
		\begin{align*}
			\dpt Q &\leq 4 \, Q\, \sqrt{C_T \weight{\ln Q}}.
		\end{align*}
		Now, define $\eta := Q(0)$. Then assuming that $\eta<1$ and solving the ODE $\dpt Y = 2\,C\,Y\,\sqrt{\n{\ln Y}}$ with $Y(0)=Q(0)$ for $t\in[0,T]$ where $T$ is so that $Y(t) \leq 1$ we get 
		\begin{equation*}
			\dpt \n{\ln Y} = -2\,C\,\sqrt{\n{\ln Y}}.
		\end{equation*}
		Therefore $\n{\ln Y} = \(\sqrt{\n{\ln\eta}} - C\,t\)^2$ and so for any $t\in[0,T]$
		\begin{equation*}
			Q(t) \leq Y(t) = e^{-\(\sqrt{\n{\ln\eta}} - C\,t\)^2}  = e^{\ln\eta - C^2\,t^2 + 2\sqrt{\n{\ln\eta}}\,C\,t} = \eta\, e^{2\,\sqrt{\n{\ln\eta}}\,C\,t}\, e^{-C^2\,t^2}.
		\end{equation*}
		In particular when $\eta$ is small, since on $\R_+$, $x\mapsto \weight{\ln(x)} x$ is an increasing function and $x^2 \,e^x \leq e^{Cx}$, it follows that
		\begin{equation*}
			\weight{\ln Q(t)} Q(t) \leq \weight{\ln Y(t)} Y(t) \leq \eta\, e^{C_T\sqrt{\n{\ln\eta}}}.
		\end{equation*}
		Since $\lambda\geq 1$, $W_2(f_1,f_2)^2\leq Q_{X^2+P^2} \leq \lambda\, Q$, which finishes the proof.
	\end{proof}
	
\subsection{Quantum case}

	Let us start with the following Lemma which indicates that the semiclassical pseudometric~\eqref{eq:Wh} controls the classical $2$-Wasserstein distance of the position densities.

	\begin{lem}\label{lem:position_vs_kinetic_W2}
		Let $f\in\P$, $\op \in \opP$ and define the associated position densities $\rho_f = \intd f\d\xi$ and $\rho(x) = \hd \op(x,x)$. Then
		\begin{equation}
			W_2(\rho_f,\rho) \leq \Wh(f,\op).
		\end{equation}
	\end{lem}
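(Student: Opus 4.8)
The plan is to exploit the fact that the semiclassical pseudometric is defined through an infimum over operator-valued couplings, and to push any such coupling forward to an honest probability coupling of the position densities. Fix any $\opgam\in\cC(f,\op)$. For each phase-space point $z=(y,\xi)$, the operator $\opgam(z)$ is a nonnegative trace-class operator with $\Tr{\opgam(z)}=f(z)$; in particular it has a well-defined integral kernel $\opgam(z;u,v)$ and a diagonal $\opgam(z;u,u)$, which is a nonnegative measure in $u$. The first step is to define a candidate classical coupling of $\rho_f$ and $\rho$ by
\begin{equation*}
	\pi(\d y, \d u) := \intd \hd\, \opgam\big((y,\xi);u,u\big)\, \d\xi\, \d y \, \d u,
\end{equation*}
i.e. integrate out the velocity variable and take the operator diagonal. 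The marginals come out right: integrating over $u$ gives $\intd \hd\Tr{\opgam((y,\xi))}\d\xi\,\d y = \intd f(y,\xi)\d\xi\,\d y = \rho_f(y)\d y$ by the definition of a coupling and of $\rho_f$, while integrating over $(y,\xi)$ gives $\hd\,\op(u,u)\d u = \rho(u)\d u$ using $\intdd \opgam(z)\d z=\op$ and the definition \eqref{eq:spatial_density_quantum} of $\rho$. So $\pi\in\Pi(\rho_f,\rho)$ and therefore $W_2(\rho_f,\rho)^2 \leq \iintd \n{y-u}^2\,\pi(\d y,\d u)$.

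The second step is to bound this last integral by the cost appearing in \eqref{eq:Wh}. The point is that for fixed $z$, $\int \n{y-u}^2\,\opgam(z;u,u)\d u$ is exactly $\Tr{\n{y-x}^2\,\opgam(z)}$, where $\n{y-x}^2$ denotes the multiplication operator by the function $u\mapsto\n{y-u}^2$; and since $\opc(z)=\n{y-x}^2+\n{\xi-\opp}^2$ with $\n{\xi-\opp}^2\geq 0$ as an operator, we get the operator inequality $\n{y-x}^2 \leq \opc(z)$, hence $\Tr{\n{y-x}^2\opgam(z)} \leq \Tr{\opc(z)\,\opgam(z)}$ by positivity of $\opgam(z)$. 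Integrating in $z$,
\begin{equation*}
	W_2(\rho_f,\rho)^2 \leq \intdd \hd\,\Tr{\n{y-x}^2\,\opgam(z)}\d z \leq \intdd \hd\,\Tr{\opc(z)\,\opgam(z)}\d z.
\end{equation*}
Taking the infimum over $\opgam\in\cC(f,\op)$ on the right-hand side yields $W_2(\rho_f,\rho)^2\leq\Wh(f,\op)^2$, which is the claim.

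The main obstacle is purely one of rigor rather than of idea: justifying that the operator diagonal $\opgam(z;u,u)$ makes sense as a nonnegative measure for a.e.\ $z$ and that the manipulations (Fubini between the $z$-integration and the trace, the identity $\Tr{\n{y-x}^2\opgam(z)}=\int\n{y-u}^2\opgam(z;u,u)\d u$ possibly equal to $+\infty$, and the passage $\intdd\opgam(z)\d z=\op$ at the level of diagonals) are all legitimate. This is handled by first assuming the right-hand side of \eqref{eq:Wh} is finite (otherwise there is nothing to prove), working with the spectral decomposition $\opgam(z)=\sum_k \lambda_k(z)\,\ket{\psi_k(z)}\bra{\psi_k(z)}$ so that the diagonal is $\sum_k\lambda_k(z)\n{\psi_k(z)}^2$, a genuine nonnegative function, and using Tonelli's theorem throughout since every quantity in sight is nonnegative. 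One can alternatively avoid kernels altogether by testing $\pi$ against continuous bounded functions and using that $\n{y-x}^2$ is a limit of bounded multiplication operators, but the kernel computation is the most transparent.
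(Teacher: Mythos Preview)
Your proof is correct and follows essentially the same route as the paper: build a classical coupling of $\rho_f$ and $\rho$ from the diagonal of the operator-valued coupling (integrating out the velocity), then drop the nonnegative momentum part $\n{\xi-\opp}^2$ of the cost $\opc(z)$ and take the infimum over $\opgam\in\cC(f,\op)$. The only difference is cosmetic---your choice of variable names and the added paragraph on rigor (spectral decomposition, Tonelli) that the paper leaves implicit.
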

	
	%{\color{blue}(The lemma to replace the gap in the "Propagation of moments" paper)}
	
	\begin{proof}
		Let $\opgam\in\cC(f,\op)$ and define $\!\rho_{\opgam}(x_0,x) := \hd \intd \opgam(z_0)(x,x)\d\xi_0$ in the weak sense as in Equation~\eqref{eq:spatial_density_quantum}  where $z_0 = (x_0,\xi_0)$ and $\opgam(z_0)(x,y)$ denotes the kernel of the operator $\opgam(z_0)$. Then $\rho_{\opgam}$ is a coupling of $\rho_f$ and $\rho$ since
		\begin{align*}
			\intd \rho_{\opgam}(x_0,x)\d x_0 &= \hd \intdd \opgam(z_0)(x,x)\d z_0 = \hd\op(x,x) = \rho(x)
			\\
			\intd \rho_{\opgam}(x_0,x)\d x &= \hd \intd \Tr{\opgam(z_0)}\d \xi_0 = \intd f(z_0)\d\xi_0 = \rho_f(x_0).
		\end{align*}
		Therefore,
		\begin{align*}
			W_2(\rho_f,\rho)^2 &\leq \intdd \n{x_0-x}^2 \rho_{\opgam}(\d x_0\d x) = \hd \intdd \Tr{\n{x_0-x}^2 \opgam(z_0)}\d z_0
			\\
			&\leq \hd \intdd \Tr{\opc(z_0)\, \opgam(z_0)}\d z_0,
		\end{align*}
		and so the result follows by taking the infimum with respect to all the semiclassical couplings $\opgam \in\cC(f,\op)$.
	\end{proof}
	
	The next lemma about the log-Lipschitz continuity of the Coulomb potential is also classical (see e.g. \cite{loeper_uniqueness_2006}). We give here a quantitative version.
	\begin{lem}\label{lem:log-lip}
		Let $K$ be given by Equation~\eqref{eq:Coulomb}. Then
		\begin{equation*}
			\n{\nabla K*\rho(x)-\nabla K*\rho(y)} \leq 2\,\cC_\infty\,\n{x-y} \(c + \ln_+\!\(\frac{R^2}{\n{x-y}^2}\)\)
		\end{equation*}
		where $\cC_\infty \geq \kappa \Nrm{\rho}{L^\infty}$, $R^3 = \frac{3\,\kappa\,\Nrm{\rho}{L^1}}{8\,\pi\,\theta_1^3\, \cC_\infty}$ with $\theta_1 = \frac{3-\sqrt{5}}{2}$ and $c = \frac{2}{3}+\frac{\sqrt{5}-\theta_2}{2}$ with $\theta_2$ the solution of $\theta_2^2 = (1-\theta_2)^3$. The constant $c$ verifies $c< 1.57$.
	\end{lem}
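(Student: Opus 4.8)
The plan is to split the difference of forces into a far-field and a near-field part using a ball $B = B(x-y, R)$ centered appropriately, exactly as in the classical Loeper argument, and to track all constants carefully to obtain the stated explicit values. First I would write, for $z \in \{x,y\}$,
\[
\nabla K * \rho(x) - \nabla K * \rho(y) = \int_{\Rd} \bigl(\nabla K(x-w) - \nabla K(y-w)\bigr) \rho(w) \d w,
\]
and note that $\nabla K(w) = -\frac{\kappa}{4\pi} \frac{w}{\n{w}^3}$, so that $\n{\nabla K(w)} = \frac{\kappa}{4\pi \n{w}^2}$. I would then fix $r := \n{x-y}$ and split the integration domain into the region $\{\n{x-w} \leq R\} \cup \{\n{y-w} \leq R\}$ (the singular region) and its complement.

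On the singular region, one discards the cancellation and bounds $\n{\nabla K(x-w) - \nabla K(y-w)} \leq \n{\nabla K(x-w)} + \n{\nabla K(y-w)}$; since $\int_{\n{u}\leq R} \n{u}^{-2} \d u = 4\pi R$ and $\Nrm{\rho}{L^\infty}$ controls the local mass, this contributes a term of size $\lesssim \kappa \Nrm{\rho}{L^\infty} R$. One then optimizes the choice of $R$ so that this matches $r \ln_+(R^2/r^2)$; the precise algebra, using $R^3 = \frac{3\kappa \Nrm{\rho}{L^1}}{8\pi \theta_1^3 \cC_\infty}$, is what forces the value $\theta_1 = \frac{3-\sqrt 5}{2}$ and the contribution of $\frac{2}{3}$ to the constant $c$. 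On the far-field region $\{\n{x-w} > R\} \cap \{\n{y-w} > R\}$, I would use the mean value inequality $\n{\nabla K(x-w) - \nabla K(y-w)} \leq r \sup_{t\in[0,1]} \n{D^2 K\bigl((1-t)x + ty - w\bigr)}$ together with $\n{D^2 K(u)} \leq \frac{C\kappa}{\n{u}^3}$; splitting once more into an intermediate annulus where $\n{x-w}$ ranges in $(R, \text{const}\cdot r)$ — contributing the logarithm $\ln_+(R^2/r^2)$ — and a genuinely far region where $\int \n{u}^{-3} \d u$ is controlled by $\Nrm{\rho}{L^\infty}$ and $\Nrm{\rho}{L^1}$ together (the annulus $\n{u} \geq c r$ integrates $\n{u}^{-3}$ to something $\lesssim \ln_+$-type only if cut off, else uses $L^1$). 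Balancing the two far-field pieces against the threshold $R$ produces the remaining constant $\frac{\sqrt 5 - \theta_2}{2}$ with $\theta_2$ solving $\theta_2^2 = (1-\theta_2)^3$.

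The main obstacle is purely bookkeeping: the constant $\cC_\infty$ is allowed to be \emph{any} upper bound $\geq \kappa \Nrm{\rho}{L^\infty}$, and the dependence on $\Nrm{\rho}{L^1}$ enters only through $R$, so one must arrange the three pieces (singular, intermediate-annulus logarithmic, genuinely-far) to each be bounded by one of the two terms $c\,r$ or $r \ln_+(R^2/r^2)$ with the \emph{same} prefactor $2\cC_\infty$. The delicate point is that $\ln_+(R^2/\n{x-y}^2)$ vanishes when $\n{x-y} \geq R$: in that regime the logarithm gives nothing and the entire estimate must come from the $c\,r$ term, so one needs a separate (easy) global Lipschitz-type bound $\n{\nabla K * \rho(x) - \nabla K * \rho(y)} \leq C \kappa (\Nrm{\rho}{L^\infty}^{1/3}\Nrm{\rho}{L^1}^{2/3})\n{x-y}$ valid for all $x,y$, and one checks $C \leq 2c$ with the stated $c < 1.57$; verifying numerically that $\frac{2}{3} + \frac{\sqrt5 - \theta_2}{2} < 1.57$ (equivalently $\theta_2 > 2\sqrt5 - 3 - \frac{4}{3} \approx 0.139$, which holds since $\theta_2 \approx 0.43$) closes the argument. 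I do not expect any conceptual difficulty beyond this optimization of constants.
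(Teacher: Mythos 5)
There is a genuine gap in your near-field treatment. You take the ``singular region'' to be $\{\n{x-w}\leq R\}\cup\{\n{y-w}\leq R\}$ with $R$ the \emph{fixed} length scale $R^3=\tfrac{3\kappa\Nrm{\rho}{L^1}}{8\pi\theta_1^3\cC_\infty}$, discard the cancellation there, and claim a contribution $\lesssim\kappa\Nrm{\rho}{L^\infty}R$ that can be ``matched'' to $r\ln_+(R^2/r^2)$ by optimizing $R$. This cannot work: that contribution is a constant independent of $r=\n{x-y}$, while the right-hand side of the lemma tends to $0$ as $r\to0$ (since $r\ln_+(R^2/r^2)\to 0$). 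The cutoff radius of the region where you give up the cancellation must be proportional to $r$ itself, say $\theta r$ — then $\int_{\n{x-w}\leq\theta r}\n{x-w}^{-2}\rho(w)\,\d w\leq 4\pi\Nrm{\rho}{L^\infty}\theta r$ is genuinely linear in $r$ — and $R$ plays a different role: it is the \emph{outer} scale at which the logarithmic annulus stops and the far field, controlled by $\Nrm{\rho}{L^1}/R^3$, takes over. Relatedly, your intermediate annulus ``$\n{x-w}\in(R,\mathrm{const}\cdot r)$'' is empty in the relevant regime $r\ll R$ and should read $(\mathrm{const}\cdot r,\,R)$; and $\theta_1$ is not forced by the formula for $R$ but by optimizing the near-field constants (it solves $\theta_1=(1-\theta_1)^2$, while $\theta_2$ solves $\theta_2^2=(1-\theta_2)^3$; these come from the two integrals with exponents $(2,1)$ and $(1,2)$).

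For comparison, the paper avoids the mean value theorem and the segment/convexity issues it entails near the singularity by using the exact pointwise inequality
\begin{equation*}
	\n{\frac{x}{\n{x}^3}-\frac{y}{\n{y}^3}}\;\leq\;\frac{\n{y-x}}{\n{x}\n{y}}\(\frac{1}{\n{x}}+\frac{1}{\n{y}}\),
\end{equation*}
which extracts the factor $\n{x-y}$ \emph{everywhere}, including at the singularity. One is then left to bound $\intd\rho(w)\,\n{x-w}^{-(3-k)}\n{y-w}^{-k}\,\d w$ for $k=1,2$, which is split over the three regions $\n{x-w}\leq\theta\delta$ (bounded by $\tfrac{4\pi\theta^k}{k(1-\theta)^k}\Nrm{\rho}{L^\infty}$), $\theta\delta\leq\n{x-w}\leq R$ (the logarithm), and $\n{x-w},\n{y-w}\geq R$ (bounded by $\Nrm{\rho}{L^1}/R^3$, whose optimization yields the $\tfrac23$ in $c$). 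Your outline does contain the right far-field/logarithmic structure, and your observation about the regime $\n{x-y}\geq R$ is sound, but as written the decomposition fails for small $\n{x-y}$, which is the heart of the lemma.
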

	From this estimate, we obtain the following inequality.
	\begin{lem}\label{lem:log-lip_2}
		With the notation of Lemma~\ref{lem:log-lip}, let $\Phi$ be the positive, concave and increasing function on $\R_+$ defined by
		\begin{equation*}
			\Phi(x) = \left\{\begin{aligned}
				&x \(c - \ln(x)\)^2 &\text{ if } x \leq x_0
				\\
				&\Phi(x_0) + c^2 \(x-x_0\) &\text{ if } x > x_0
			\end{aligned}\right.
		\end{equation*}
		where $\ln(x_0) = c-1-\sqrt{1+c^2}$ (it implies $x_0\in(0.27,0.28)$). Then for any $\gamma\in\P(\Rdd)$,
		\begin{equation*}
			\intdd \n{\nabla K * \rho(x) - \nabla K * \rho(y)}^2 \gamma(\d x\d y) \leq \(2\,\cC_\infty\)^2 R^2\, \Phi(Q_X/R^2)
		\end{equation*}
		where
		\begin{equation*}
			Q_X := \intdd \n{x-y}^2 \gamma(\d x\d y).
		\end{equation*}
	\end{lem}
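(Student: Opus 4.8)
The plan is to reduce the estimate to a one–variable convexity statement and then invoke Jensen's inequality. First I would pass to the rescaled variable $r := \n{x-y}^2/R^2$. Since $\ln_+\!\(R^2/\n{x-y}^2\) = \(-\ln r\)_+$, squaring the bound of Lemma~\ref{lem:log-lip} yields the pointwise inequality
\[
	\n{\nabla K * \rho(x) - \nabla K * \rho(y)}^2 \leq \(2\,\cC_\infty\)^2 R^2 \, \psi(r), \qquad \psi(r) := r\(c + \(-\ln r\)_+\)^2,
\]
where $\psi(r) = r\(c-\ln r\)^2$ for $0 < r \leq 1$ and $\psi(r) = c^2\,r$ for $r \geq 1$. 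Granting that $\psi \leq \Phi$ on $\R_+$ and that $\Phi$ is concave, the conclusion follows by integrating against the probability measure $\gamma$ and applying Jensen's inequality to $\Phi$:
\[
	\intdd \n{\nabla K * \rho(x) - \nabla K * \rho(y)}^2 \gamma(\d x\d y) \leq \(2\,\cC_\infty\)^2 R^2 \intdd \Phi\!\(\tfrac{\n{x-y}^2}{R^2}\) \gamma(\d x\d y) \leq \(2\,\cC_\infty\)^2 R^2\, \Phi\!\(\tfrac{Q_X}{R^2}\).
\]

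Next I would establish the stated properties of $\Phi$. On $(0,x_0]$ one has $\Phi(x) = x\(c-\ln x\)^2$, with $\Phi'(x) = \(c-\ln x\)\(c-\ln x-2\)$ and $\Phi''(x) = \tfrac{2}{x}\(1-c+\ln x\)$, so this branch is concave exactly on $(0,e^{c-1}]$. The defining relation $\ln x_0 = c-1-\sqrt{1+c^2}$ is precisely the equation $\Phi'(x_0)=c^2$ (set $u := c-\ln x_0$ and solve $u^2-2u=c^2$ for the positive root $u = 1+\sqrt{1+c^2}$); it forces $\ln x_0 < c-1$, hence $x_0 < e^{c-1}$, and a short numerical check gives $x_0\in(0.27,0.28)$. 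Moreover $c > 1$ (from $c = \tfrac23 + \tfrac{\sqrt5-\theta_2}{2}$ with $\theta_2\in(0,1)$), so $1 < e^{c-1}$ as well. Hence the affine branch of $\Phi$ on $[x_0,\infty)$ is exactly the tangent line to the first branch at $x_0$, so $\Phi\in C^1(\R_+)$, $\Phi'$ is continuous and non-increasing with $\Phi'\geq c^2>0$, and therefore $\Phi$ is concave, increasing and positive.

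It then remains to prove $\psi\leq\Phi$ on $\R_+$, which I would split into three ranges. For $0 < r \leq x_0$ there is equality $\psi(r)=\Phi(r)$. For $x_0 \leq r \leq 1$, the concavity of $x\mapsto x\(c-\ln x\)^2$ on $(0,1]\subset(0,e^{c-1}]$ gives $\psi(r)=r\(c-\ln r\)^2 \leq \Phi(x_0)+c^2\(r-x_0\)=\Phi(r)$. For $r \geq 1$, $\psi(r)=c^2 r$, so the desired bound $\psi(r)\leq \Phi(x_0)+c^2\(r-x_0\)$ reduces to $c^2 x_0 \leq \Phi(x_0)=x_0\(c-\ln x_0\)^2$, that is to $c^2 \leq \(1+\sqrt{1+c^2}\)^2$, which is clear.

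The only delicate point is the bookkeeping around the breakpoint $x_0$: one must check that replacing the convex kink of $\psi$ at $r=1$ by an affine piece of $\Phi$ starting already at $x_0$ both preserves concavity of $\Phi$ and keeps $\Phi$ above $\psi$ for every $r$. This is exactly why $x_0$ is taken to be the abscissa where the tangent to the first branch of $\Phi$ has slope $c^2$, the asymptotic slope of $\psi$; all the remaining verifications — the concavity range $(0,e^{c-1}]$ of the first branch, and the numerical bounds $x_0\in(0.27,0.28)$ and $c<1.57$ (the latter from Lemma~\ref{lem:log-lip}) — are elementary once that identity is in place.
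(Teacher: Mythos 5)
Your proof is correct and follows exactly the route the paper intends (the paper leaves this lemma's proof implicit, deriving it "from" Lemma~\ref{lem:log-lip}): square the log-Lipschitz bound, check that $\Phi$ is the concave majorant of $r\mapsto r\(c+\(-\ln r\)_+\)^2$ obtained by replacing the branch beyond $x_0$ with the tangent line of slope $c^2$, and conclude by Jensen. All the verifications — $\Phi'(x_0)=c^2$ characterizing $x_0$, concavity of the first branch on $(0,e^{c-1}]\supset(0,1]$, and the comparison $c^2\le\(1+\sqrt{1+c^2}\)^2$ on $[1,\infty)$ — are accurate.
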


	\begin{lem}\label{lem:diff_ineq}
		Assume $\rho = h^3 \op(x,x) \in L^1\cap L^\infty$ and $\rho_f = \intd f\d \xi \in L^\infty$. Then, with the notations of Lemma~\ref{lem:log-lip_2}, for any smooth $\lambda = \lambda(t) > 0$,
		\begin{equation*}
			Q(t) := \frac{1}{\lambda R^2}\intdd \hd\Tr{\(\lambda(t)\n{x-y}^2+\n{\xi-\opp}^2\)\opgam(z)} \d z
		\end{equation*}
		verifies
		\begin{equation*}
			\dpt Q \leq Q \(\sqrt{\lambda} + C_\infty \frac{1 + 2\sqrt{\Phi(Q)/Q}}{\sqrt{\lambda}}\),
		\end{equation*}
		where $C_\infty(t) \geq \kappa \max(\sNrm{\rho_{f(t)}}{L^\infty},\Nrm{\rho(t)}{L^\infty})$ and $R^3 = \frac{3\,\kappa\,\Nrm{\rho}{L^1}}{8\,\pi\,\theta_1^3\, C_\infty}$.
	\end{lem}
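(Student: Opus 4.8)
The plan is to mimic the classical computation sketched above, but now tracking the time derivative of the "coupled second moment" along the Hartree and Vlasov flows. Given a coupling $\opgam(t)\in\cC(f(t),\op(t))$ — obtained by transporting an initial optimal coupling $\opgam^\init$ along the two dynamics, so that the classical transport flow $(X(t),\Xi(t))$ acts on the "$f$ side" while the Heisenberg evolution acts on the "$\op$ side" — I would compute $\dpt$ of the three quadratic forms $Q_{XX}(t) = \hd\intdd\Tr{\n{y-x}^2\opgam(z)}\d z$, $Q_{X\Xi}(t) = \hd\intdd\Tr{(y-x)\cdot(\xi-\opp)\,\opgam(z)}\d z$ (symmetrized appropriately so that it is real) and $Q_{\Xi\Xi}(t) = \hd\intdd\Tr{\n{\xi-\opp}^2\opgam(z)}\d z$. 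The transport term in $\dpt Q_{XX}$ produces $2\,Q_{X\Xi}$, and the transport/commutator terms in $\dpt Q_{X\Xi}$ and $\dpt Q_{\Xi\Xi}$ produce respectively a $Q_{\Xi\Xi}$ contribution and the crucial "force difference" contribution $2\,\hd\intdd\Tr{(\xi-\opp)\cdot\bigl(E_{f}(x)-E_{\op}(\opp\text{-side})\bigr)\opgam(z)}\d z$, where one writes the force difference as $(E_f(x)-E_f(y)) + (E_f(y)-E_\op(\cdot))$; the first piece is handled by the log-Lipschitz estimate and the second, the "potential consistency" error, by Lemma~\ref{lem:position_vs_kinetic_W2} together with the $L^\infty$ bound on $\rho$ and $\rho_f$ (this is exactly where the factor $\kappa\max(\sNrm{\rho_f}{L^\infty},\Nrm{\rho}{L^\infty})$ enters $C_\infty$). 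These are the standard Golse–Paul estimates adapted in~\cite{lafleche_propagation_2019}; I will invoke them rather than rederiving.

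Next, after Cauchy–Schwarz on the $Q_{X\Xi}$ and force-difference terms, and using the Jensen-type bound from Lemma~\ref{lem:log-lip_2} to replace $\hd\intdd\Tr{\n{E_f(x)-E_f(y)}^2\opgam}\d z$ by $(2\cC_\infty)^2R^2\,\Phi(Q_{XX}/R^2)$, I would assemble the differential inequality for $Q_\lambda := \lambda\,Q_{XX} + Q_{\Xi\Xi}$:
\begin{equation*}
	\dpt Q_\lambda \leq \frac{\lambda'}{\lambda}Q_\lambda + 2\,Q_\lambda\Bigl(\sqrt{\lambda} + \frac{C_\infty}{\sqrt{\lambda}}\bigl(1 + 2\sqrt{\Phi(Q)/Q}\bigr)\Bigr),
\end{equation*}
using $Q_{\Xi\Xi}\le Q_\lambda$, $Q_{XX}\le Q_\lambda/\lambda$, $\sqrt{Q_{XX}Q_{\Xi\Xi}}\le Q_\lambda/(2\sqrt\lambda)$ and $\Phi$ increasing so that $\Phi(Q_{XX}/R^2)\le\Phi(Q)$ with $Q = Q_\lambda/(\lambda R^2)$. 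Dividing through by $\lambda R^2$ turns this into the claimed inequality for $Q = Q_\lambda/(\lambda R^2)$, since the $\lambda'/\lambda$ term cancels exactly — that is the whole point of passing from $Q_\lambda$ to the normalized quantity $Q$, as in the classical sketch. One has to be slightly careful that $R$ depends on $C_\infty$ and hence on $t$, but it does not depend on $\lambda$, so the division by the constant $\lambda R^2$ (at fixed $t$) is legitimate and the $R$-dependence is already absorbed into the definition of $Q$ and of $\Phi(Q)$.

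The main obstacle is the rigorous justification of the differentiation of $Q_{XX}$, $Q_{X\Xi}$, $Q_{\Xi\Xi}$ in time: one must produce an admissible time-dependent coupling $\opgam(t)$ whose trace against $\opc$ is differentiable, control the commutator term $\hd\Tr{\n{\xi-\opp}^2[H_{\op},\opgam]}$ (which requires enough velocity moments on $\op$, supplied by $\op\in\opP_4$ from Theorem~\ref{thm:moments} and the propagation in~\cite{lafleche_propagation_2019}), and handle the singularity of the Coulomb force in the cross term — all of which is exactly the content of the estimates in~\cite{golse_schrodinger_2017, lafleche_propagation_2019}, so I would cite those and focus the written proof on the algebraic assembly above. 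A secondary point is checking the sign and the numerical constant "$2$" in front of $\sqrt{\Phi(Q)/Q}$: it comes from writing the force difference as the sum of the log-Lipschitz piece and the consistency piece, each contributing a $C_\infty$, one of them weighted by the extra $\sqrt{\Phi(Q)/Q}$ factor coming from the log-Lipschitz modulus — I would double-check that the "$1 + 2\sqrt{\Phi(Q)/Q}$" structure matches, noting $\Phi(x)/x \ge c^2$ so the "$1$" is genuinely dominated and is kept only for cleanliness.
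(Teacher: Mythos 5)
Your proposal follows essentially the same route as the paper: the same transported coupling $\opgam(t,z)$, the same quadratic forms $Q_X$, $Q_P$ and their time derivatives from~\cite{golse_schrodinger_2017}, Cauchy--Schwarz for the trace, the splitting of $E_f(x)-E_{\op}(y)$ into a Loeper consistency piece (controlled through Lemma~\ref{lem:position_vs_kinetic_W2}) and a log-Lipschitz piece (controlled through Lemma~\ref{lem:log-lip_2}), and the cancellation of $\lambda'/\lambda$ upon normalizing by $\lambda R^2$ --- the only structural difference being that you split through $E_f(y)$ (log-Lipschitz on the classical force, consistency weighted by $\rho$) while the paper splits through $E_{\op}(x)$ (log-Lipschitz on the quantum force, consistency weighted by $\rho_f$), a symmetric variant that works equally well since $C_\infty$ dominates both densities. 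The one point to tighten is constants: your assembled inequality carries an overall factor $2$ in front of the parenthesis, whereas the coefficient $1$ on $\sqrt{\lambda}$ and on $C_\infty/\sqrt{\lambda}$ in the stated lemma comes precisely from the bound $\sqrt{Q_X Q_P}\le Q_\lambda/(2\sqrt{\lambda})$ that you list but then do not apply consistently.
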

	
	\begin{proof}
		Let $\opgam(z) = \opgam(t,z)$ be such that $\opgam(0,\cdot) = \opgam^\init$ is the optimal coupling between $f^\init$ and $\op^\init$ and
		\begin{equation*}
			\dpt \opgam = -(\xi\cdot\Dx+E_f\cdot\Dv)\opgam + \frac{1}{i\hbar} \com{H,\opgam}.
		\end{equation*}
		Then $\opgam\in\cC(f(t),\op(t))$ (see e.g. \cite[Lemma~4.2]{golse_schrodinger_2017}). We define $Q_\lambda := \lambda(t)\, Q_X + Q_P$ with
		\begin{align*}
			Q_{X} := \intdd \hd\Tr{\n{x-y}^2\opgam(z)} \d z
			\\
			Q_{P} := \intdd \hd\Tr{\n{\xi-\opp}^2\opgam(z)} \d z
		\end{align*}
		with $z=(x,\xi)$ and $\opp = -i\hbar\nabla_y$. Then by a direct computation detailed in \cite[Section~4.3]{golse_schrodinger_2017}, one can write the time derivative of the above quantities in the following way
		\begin{equation*}
			\dpt Q_{X} = \intdd \hd\Tr{\(\(x-y\)\cdot\(\xi-\opp\)+\(\xi-\opp\)\cdot\(x-y\)\)\opgam(z)} \d z
		\end{equation*}
		and 
		\begin{align*}
			\dpt Q_{P} &= \intdd \hd\Tr{\(\(E_f(x)-E_{\op}(y)\)\cdot\(\xi-\opp\)+\(\xi-\opp\)\cdot\(E_f(x)-E_{\op}(y)\)\)\opgam(z)} \d z.
		\end{align*}
		By a variant of the Cauchy--Schwartz inequality for the trace of the form $\Tr{A^*B\,\opgam}^2 \leq \tr(\n{A}^2\opgam) \tr(\n{B}^2\opgam)$, it yields
		\begin{equation*}
			\dpt Q_\lambda \leq \lambda' \,Q_X + 2\,\lambda\, Q_{X}^{1/2}\,Q_{P}^{1/2} + 2\, Q_{P}^{1/2}\,Q_{E}^{1/2}
		\end{equation*}
		where $\lambda'$ denotes the derivative of $\lambda$ with respect to time and
		\begin{equation*}
			Q_E = \intdd \hd\Tr{\n{E_f(x) - E_{\op}(y)}^2\opgam(z)} \d z.
		\end{equation*}
		Denoting by $\opgam(z)(y_1,y_2)$ the integral kernel of $\opgam(z)$, we get
		\begin{equation*}
			Q_E = \hd \intdd \intd \n{E_f(x)-E_{\op}(y)}^2 \opgam(z)(y,y)\d y \d z
		\end{equation*}
		and since $\opgam(z)(y,y)\geq 0$ as the diagonal of the kernel of a positive operator, by the triangle inequality
		\begin{equation*}
			Q_E^{1/2} \leq I_1 + I_2
		\end{equation*}
		where 
		\begin{align*}
			I_1^2 &= \intd \n{E_f(x)-E_{\op}(x)}^2 \rho_f(x)\d x
			\\
			I_2^2 &= \hd \intdd \intd \n{E_{\op}(x)-E_{\op}(y)}^2 \opgam(z)(y,y)\d y \d z
		\end{align*}
		using the fact that $\hd \intd \opgam(z)(y,y)\d y = f(z)$. By Loeper's inequality \cite[Theorem~2.7]{loeper_uniqueness_2006},
		\begin{equation*}
			I_1 \leq \kappa\,\max(\Nrm{\rho_f}{L^\infty},\Nrm{\rho}{L^\infty})^\frac{1}{2} \Nrm{\rho_f}{L^\infty}^\frac{1}{2} W_2(\rho_f,\rho).
		\end{equation*}
		Therefore, by Lemma~\ref{lem:position_vs_kinetic_W2}, and since $\opgam$ is a coupling of $f$ and $\op$, it gives
		\begin{equation}\label{eq:bound_I1}
			I_1 \leq C_\infty \,Q_X^{1/2}.
		\end{equation}
		On the other side, by Lemma~\ref{lem:log-lip_2},
		\begin{equation}\label{eq:bound_I2}
			I_2 \leq 2\,C_\infty\, R\,\sqrt{\Phi(Q_X/R^2)}.
		\end{equation}
		Therefore, combining Inequality~\eqref{eq:bound_I1} and Inequality~\eqref{eq:bound_I2}, we get
		\begin{equation*}
			Q_E^{1/2} \leq C_\infty\(\sqrt{Q_X} + R\,\sqrt{\Phi(Q_X/R^2)}\).
		\end{equation*}
		That is, coming back to the inequality for $\dpt Q_\lambda$,
		\begin{align*}
			\dpt Q_\lambda &\leq \lambda' \,Q_X + 2\,Q_P^{1/2}\(\lambda\, Q_X^{1/2} + C_\infty \,Q_X^{1/2} + 2\,C_\infty\,R\, \sqrt{\Phi(Q_X/R^2)}\)
			\\
			&\leq \frac{\lambda'}{\lambda} \,Q_\lambda + Q_{\lambda} \(\sqrt{\lambda} + \frac{C_\infty}{\sqrt{\lambda}} + 2\,C_\infty\, \sqrt{\tfrac{R^2}{Q_\lambda}\Phi(\tfrac{Q_\lambda}{\lambda\,R^2})}\)
		\end{align*}
		where we used the fact that $Q_X \leq \lambda^{-1} Q_\lambda$ and $(Q_PQ_X)^{1/2} \leq Q_\lambda/(2\sqrt{\lambda})$.
	\end{proof}
	
	It now suffices to study the ordinary differential equation associated to the above lemma. This gives the following inequalities.
	\begin{lem}\label{lem:gronwall_ineq}
		With the notations of Lemma~\ref{lem:diff_ineq}, there exists $\lambda = \lambda(t) > 0$ such that
		\begin{equation*}
			Q(t) \leq Q(0)\,e^{2\Lambda_1(t)\sqrt{(c_2-\ln Q(0))_+} + 2\Lambda_2(t)}
		\end{equation*}
		where $\Lambda_1(t) = \Lambda(\min(t,\tau))$ and
		\begin{equation*}
			\Lambda_2(t) = \sqrt{c_2} \(\Lambda(t)-\Lambda(\tau)\)_+ - \Lambda_1(t)^2/2
		\end{equation*}
		with
		\begin{align*}
			\Lambda(t) &= \int_0^t \sqrt{2\,C_\infty(s)} \d s
			\\
			\tau &= \Lambda^{-1}\!\(\sqrt{(c_2-\ln Q(0))_+} - \sqrt{c_2+y_0}\)_+
		\end{align*}
		where $y_0 = c-1-\sqrt{1+c^2} < 1.3$, $c_2 = c + \tfrac{1}{2} < 2.07$ and $\lambda$ verifies
		\begin{equation*}
			\lambda = C_\infty \(1+2 \sqrt{\Phi(Q)/Q}\),
		\end{equation*}
		with $c$ and $\Phi$ defined in Lemma~\ref{lem:log-lip_2}
	\end{lem}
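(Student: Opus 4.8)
The plan is to begin by optimizing, over the free parameter $\lambda$ of Lemma~\ref{lem:diff_ineq}, the bracket on the right-hand side of the differential inequality proved there. For any $a>0$ the function $\lambda\mapsto\sqrt{\lambda}+a/\sqrt{\lambda}$ attains its minimum $2\sqrt{a}$ at $\lambda=a$, so the natural choice is $\lambda=C_\infty(1+2\sqrt{\Phi(Q)/Q})$, which formally collapses the inequality into $\dpt Q\le 2\,Q\,\sqrt{C_\infty(1+2\sqrt{\Phi(Q)/Q})}$. The delicate point — and, I expect, the main obstacle — is that $Q$ itself depends on $\lambda$, through $Q=Q_X/R^2+Q_P/(\lambda R^2)$, so the prescription above is an implicit equation for $\lambda(t)$ and one must argue that it admits an admissible (positive, sufficiently regular in $t$) solution. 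This follows as in \cite[Lemma~3.7]{iacobelli_new_2022}: writing $Q=Q(\lambda)$, the map $\lambda\mapsto\lambda$ is increasing while $\lambda\mapsto C_\infty(1+2\sqrt{\Phi(Q(\lambda))/Q(\lambda)})$ is increasing (because $x\mapsto\Phi(x)/x$, which equals $(c-\ln x)^2$ for $x\le x_0$ and $c^2+(\Phi(x_0)-c^2x_0)/x$ for $x>x_0$, is non-increasing) but stays bounded as $\lambda\to0^+$ and grows only sublinearly as $\lambda\to+\infty$; an intermediate-value argument then yields such a $\lambda(t)$, which by construction verifies the last identity in the statement.

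Granting this, the proof reduces to a scalar comparison argument. Setting $\Lambda'(t)=\sqrt{2\,C_\infty(t)}$ and distinguishing the two regimes of $\Phi$ from Lemma~\ref{lem:log-lip_2}: for $Q\le x_0$ one has $\sqrt{\Phi(Q)/Q}=c-\ln Q$, hence $1+2\sqrt{\Phi(Q)/Q}=2(c_2-\ln Q)$ with $c_2=c+\tfrac12$, and the inequality becomes $\dpt Q\le 2\,\Lambda'(t)\,Q\,\sqrt{c_2-\ln Q}$; for $Q>x_0$, monotonicity of $\Phi(Q)/Q$ bounds $\sqrt{\Phi(Q)/Q}$ by its value $c-\ln x_0$ at $x_0$, giving the constant-coefficient inequality $\dpt Q\le 2\,\Lambda'(t)\,Q\,\sqrt{c_2+y_0}$, where $\sqrt{c_2+y_0}$ is the value of $\sqrt{(1+2\sqrt{\Phi(Q)/Q})/2}$ at $Q=x_0$. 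To treat cleanly the possibility that $Q$ crosses $x_0$ more than once, I would compare $Q$ with the unique increasing solution $\bar Q$ of $\dot{\bar Q}=2\,\bar Q\,\sqrt{C_\infty(1+2\sqrt{\Phi(\bar Q)/\bar Q})}$ with $\bar Q(0)=Q(0)$, so that $Q(t)\le\bar Q(t)$ and $\bar Q$ meets $x_0$ at a single time.

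Both regimes then integrate explicitly. In the first, dividing by $\bar Q$ and substituting $w=c_2-\ln\bar Q\ge0$ converts $\dpt\ln\bar Q\le2\,\Lambda'(t)\sqrt{w}$ into $\dpt\sqrt{w}\ge-\Lambda'(t)$, whence $\sqrt{c_2-\ln\bar Q(t)}\ge\sqrt{c_2-\ln Q(0)}-\Lambda(t)$; squaring while the right-hand side is nonnegative yields $\bar Q(t)\le Q(0)\,e^{2\Lambda(t)\sqrt{(c_2-\ln Q(0))_+}-\Lambda(t)^2}$, which is the claimed bound on $[0,\tau]$ with $\Lambda_1(t)=\Lambda(t)$ and $\Lambda_2(t)=-\Lambda(t)^2/2$. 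The crossing time is read off from the same identity: $\bar Q(\tau)=x_0$ is equivalent to $\sqrt{c_2-\ln Q(0)}-\Lambda(\tau)=\sqrt{c_2+y_0}$, that is, to the value of $\tau$ in the statement, and then the exponent at $t=\tau$ equals $-\ln Q(0)-y_0$, i.e. $\bar Q(\tau)\le e^{-y_0}=x_0$. For $t>\tau$ the constant-coefficient inequality $\dpt\ln\bar Q\le 2\sqrt{c_2+y_0}\,\Lambda'(t)$ integrates from $\tau$; inserting the value of $\bar Q(\tau)$ produces the exponent $2\Lambda(\tau)\sqrt{(c_2-\ln Q(0))_+}-\Lambda(\tau)^2+2\sqrt{c_2+y_0}\,(\Lambda(t)-\Lambda(\tau))$, which is $2\Lambda_1(t)\sqrt{(c_2-\ln Q(0))_+}+2\Lambda_2(t)$ with $\Lambda_1(t)=\Lambda(\tau)$. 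Combining the two regimes with $Q\le\bar Q$ and the recorded formula for $\lambda$ finishes the proof; beyond the admissibility of $\lambda$, the only real care needed is the bookkeeping of the transition at $\tau$ and of the constants $c,c_2,x_0,y_0$ inherited from Lemma~\ref{lem:log-lip_2}.
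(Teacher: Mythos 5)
Your proposal follows essentially the same route as the paper: optimize the bracket by taking $\lambda=\ell(Q)$ with $\ell(q)=C_\infty(1+2\sqrt{\Phi(q)/q})$, resolve the implicit definition of $\lambda$ by monotonicity/intermediate value as in \cite[Lemma~3.7]{iacobelli_new_2022}, and then integrate the resulting scalar inequality in the two regimes separated by $Q=x_0$ (equivalently $Y=-\ln Q$ crossing $y_0=-\ln x_0$), with the same explicit square-root substitution in the logarithmic regime and the same identification of $\tau$ as the crossing time. The only divergence is in the constant for $t\geq\tau$: you obtain $\sqrt{c_2+y_0}$ (from the bound $\sqrt{\Phi(Q)/Q}\leq c+y_0$ for $Q\geq x_0$) where the lemma states $\sqrt{c_2}$; your constant is the one actually justified by the monotonicity of $\Phi(q)/q$ (the paper's step invokes the lower bound $\Psi\geq c$ where an upper bound is needed), and since $\sqrt{c_2+y_0}$ is still a universal constant the difference is harmless for the main theorem.
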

	
	\begin{proof}
		Let
		\begin{equation*}
			\ell(q) := C_\infty \(1+2 \sqrt{\Phi(q)/q}\).
		\end{equation*}
		This is a convex decreasing function which converges to the constant $C_\infty\(1+2\,c\)$ when $q\to\infty$ and is such that $\ell(q) \sim 2\,C_\infty\,\n{\ln q}$ when $q\to 0$. Then by Lemma~\ref{lem:diff_ineq},
		\begin{equation*}
			\dpt Q \leq Q \(\sqrt{\lambda} + \frac{\ell(Q)}{\sqrt{\lambda}}\).
		\end{equation*}
		Optimizing with respect to $\lambda$, we take $\lambda := \ell(Q)$. More precisely, since for any $\alpha,\beta > 0$, there is always a solution to the equation $q = \alpha + \beta/\ell(q)$, we define $Q$ as the solution of $Q = Q_X + Q_P/\ell(Q)$. This gives $\dpt Q \leq 2\,Q \,\sqrt{\ell(Q)}$ and so
		\begin{equation*}
			\dpt \ln(Q) \leq 2 \sqrt{C_\infty} \sqrt{1 + 2\,\Psi(-\ln(Q))}
		\end{equation*}
		where $\Psi(y) = \sqrt{\Phi(e^{-y})\,e^{y}}$. Let $Y(t) := -\ln(Q(t))$, then this implies that
		\begin{equation*}
			Y' \geq -2 \sqrt{C_\infty} \sqrt{1 + 2\,\Psi(Y)}.
		\end{equation*}
		Notice that $\Psi(y) = c+y$ when $y\geq y_0$, so that
		\begin{align*}
			Y' \geq -2\,\sqrt{2\,C_\infty}\,\sqrt{c_2 + Y}
		\end{align*}
		where $c_2 = c+\frac{1}{2}$. Hence, assuming $Y(0)\geq y_0$, Gr\"onwall's lemma implies that
		\begin{equation}\label{eq:ineq_Y}
			Y(t)+c_2 \geq \(\sqrt{c_2+Y(0)}-\int_0^t \sqrt{2\,C_\infty(s)}\d s\)^2
		\end{equation}
		for any $t\in[0,t_0]$ where $t_0\in [0,\infty]$ is the first time such that $Y(t_0) = y_0$. In particular, this also holds for $t\leq \tau$ where $\tau \in [0,t_0]$ is the first time such that the right-hand side of Inequality~\eqref{eq:ineq_Y} takes the value $y_0+c_2$. If we already had $Y(0) \leq y_0$, then we set $\tau = 0$. That is, setting $\Lambda := \int_0^t \sqrt{2\,C_\infty(s)}\d s$, in general, we define
		\begin{equation*}
			\tau = \Lambda^{-1}\!\(\sqrt{\(c_2+Y(0)\)_+} - \sqrt{y_0+c_2}\)_+.
		\end{equation*}
		When $t \geq \tau$, then $Y(t)$ is larger than the solution $Z(t)$ to
		\begin{equation*}
			Z' = -2 \,\sqrt{C_\infty} \sqrt{1 + 2\,\Psi(Y)}
		\end{equation*}
		with initial condition $Z(\tau) = \min(y_0,Y(0))$. Since $Z$ is decreasing, $Z(t) \leq y_0$ for any $t \geq \tau$ and so $\Psi(y) \geq c$. The Gr\"onwall Lemma then implies $Y(t) \geq -2\,\sqrt{c_2} \(\Lambda(t)-\Lambda(\tau)\)$. For any $t\geq 0$, we thus obtained the following inequalities
		\begin{equation}\label{eq:Q_ineq_0}
			Q(t) \leq \left\{\begin{aligned}
				& e^{-(\sqrt{(c_2-\ln Q(0))_+} - \Lambda(t))^2+c_2} &&\text{ for } t < \tau
				\\
				& \max(x_0,Q(0)) \, e^{2\,\sqrt{c_2} \(\Lambda(t)-\Lambda(\tau)\)} &&\text{ for } t\geq \tau.
			\end{aligned}\right.
		\end{equation}
		When $t<\tau$, the above inequality can be written $Q(t) \leq Q(0)\,e^{2\Lambda(t)\sqrt{(c_2-\ln Q(0))_+} - \Lambda(t)^2}$. Notice that it follows from the definitions that $x_0 = e^{-(\sqrt{(c_2-\ln(Q(0)))_+} - \Lambda(\tau))^2+c_2}$, so that when $Q(0) > x_0$ and $t\geq \tau$, one gets 
		\begin{equation*}
			Q(t) \leq Q(0)\,e^{2\Lambda(\tau)\sqrt{(c_2-\ln(Q(0)))_+} - \Lambda(\tau)^2} e^{2\,\sqrt{c_2} \(\Lambda(t)-\Lambda(\tau)\)}
		\end{equation*}
		leading to the result.
	\end{proof}
	
	\begin{proof}[Proof of Theorem~\ref{thm:main}]
		Take $C_\infty(t) = \max(1,\kappa)\max(1,\Nrm{\rho(t)}{L^\infty},\sNrm{\rho_{f(t)}}{L^\infty})$. Then by the definitions of $Q$ and $\lambda$ given in the proof of Lemma~\ref{lem:gronwall_ineq}, one obtains that $\lambda \geq C_\infty\(1+2\,c\) \geq 1$ and
		\begin{equation*}
			Q_X + Q_P \leq R^2\,\lambda(Q)\,Q  \quad \text{ and } \quad R^2\,Q \leq Q_X + Q_P.
		\end{equation*}
		Moreover, by definition, $\Wh(f,\op)^2 \leq Q_X(t) + Q_P(t)$. Therefore, defining $W(t) := \Wh(f(t),\op(t))/R(t)$ and $w = W(0)/R(0)$, by Lemma~\ref{lem:gronwall_ineq} and then by minimizing over all couplings $\opgam\in\cC(f,\op)$, it holds
		\begin{equation*}
			W(t)^2 \leq \lambda\!\(w\,e^{2\Lambda_1(t)\sqrt{(c_2-\ln w)_+} + 2\Lambda_2(t)}\)  w\,e^{2\Lambda_1(t)\sqrt{(c_2-\ln w)_+} + 2\Lambda_2(t)}
		\end{equation*}
		and so if $t\leq \tau$
		\begin{equation*}
			W(t) \leq \sqrt{C_\infty}\(\sqrt{c_2-\ln w} - \Lambda(t)\) w\,e^{\Lambda_1(t)\sqrt{c_2-\ln w} + \Lambda_2(t)}.
		\end{equation*}
		Using Young's inequality for the product $a\,\sqrt{b} \leq \eps\,b + \frac{a^2}{4\,\eps}$ and the fact that for $\theta\leq 1$, $-\ln(\theta) \leq \theta^{-\epsilon}/(\epsilon\,e)$, this can be written
		\begin{equation*}
			W(t) \leq \sqrt{C_\infty}\, \frac{c_2^{\epsilon+\eps}}{\epsilon\,e} \, w^{1-\eps-\epsilon} \,e^{\Lambda_2(t) + \frac{\Lambda_1(t)^2}{4\eps}}.
		\end{equation*}
		This implies that for any $\eps>0$,
		\begin{equation*}
			W(t) \leq \sqrt{C_\infty}\, \frac{2\,c_2^{\eps}}{\eps\,e} \, w^{1-\eps} \,e^{\Lambda_2(t) + \frac{\Lambda_1(t)^2}{2\eps}}.
		\end{equation*}
		Recalling that $R(t)^3 = \frac{3\kappa\Nrm{\rho}{L^1}}{8\pi\theta_1^3 C_\infty}$ and assuming $\Nrm{\rho}{L^1} = 1$, it yields
		\begin{align*}
			\Wh(f(t),\op(t)) &\leq \frac{2}{\eps\,e}\, C_\infty^{1/6}  \(C_\infty^\init\)^{\frac{2\(1-\eps\)}{3}} \(\frac{3\,\kappa\,c_2^3}{8\,\pi\,\theta_1^3}\)^\frac{2\eps}{3} \Wh(f^\init,\op^\init)^{1-\eps} \,e^{\Lambda_2(t) + \frac{\Lambda_1(t)^2}{2\eps}}
			\\
			&< \frac{2}{\eps} \,C_\infty^{1/6} \(C_\infty^\init\)^{\frac{2}{3}} \Wh(f^\init,\op^\init)^{1-\eps} \,e^{\Lambda_2(t) + \frac{\Lambda_1(t)^2}{2\eps}}.
		\end{align*}
		If $t\geq \tau$, $\lambda$ is bounded by a constant and more precisely
		\begin{equation*}
			W(t) \leq \sqrt{2\,C_\infty \(c_2+\,y_0\)} \, w\,e^{\Lambda_1(t)\sqrt{(c_2-\ln w)_+} + \Lambda_2(t)}
		\end{equation*}
		so using again Young's inequality for the product
		\begin{align*}
			W(t) &\leq \sqrt{C_\infty}\,\sqrt{2\(c_2+y_0\)} \, \max(c_2^{\eps/2}\,w^{1-\eps/2},w)\, e^{\Lambda_2(t) + \frac{\Lambda_1(t)^2}{2\eps}}
			\\
			&\leq \sqrt{C_\infty}\,\sqrt{2\(c_2+y_0\)} \, \max(c_2^{\eps}\,w^{1-\eps},w)\, e^{\Lambda_2(t) + \frac{\Lambda_1(t)^2}{2\eps}}
		\end{align*}
		Hence, by the definition of $R$
		\begin{multline*}
			\Wh(f(t),\op(t)) \leq \sqrt{2\(c_2+y_0\)} \,C_\infty^{1/6}  \(C_\infty^\init\)^{\frac{1}{3}} \\ \max\!\(\!\(\tfrac{3\,c_2^3}{8\,\pi\,\theta_1^3}\)^\frac{\eps}{3}  \Wh(f^\init,\op^\init)^{1-\eps},\Wh(f^\init,\op^\init)\) e^{\Lambda_2(t) + \frac{\Lambda_1(t)^2}{2\eps}},
		\end{multline*}
		which yields the result using the fact that $\eps\in(0,1)$.
	\end{proof}

\appendix
\section{Explicit estimates for the Coulomb potential}

	In this section, we give a proof of Lemma~\ref{lem:log-lip}, allowing us both to be more self-contained and to give explicit upper bound to the constants. We start by the following elementary inequality.
	\begin{lem}
		For any $(x,y)\in(\Rd\setminus\set{0})^2$,
		\begin{equation}
			\\\label{eq:Coulomb_3}
			\n{\frac{x}{\n{x}^3} - \frac{y}{\n{y}^3}} \leq \frac{\n{y-x}}{\n{x}\n{y}}\(\frac{1}{\n{x}}+\frac{1}{\n{y}}\).
		\end{equation}
	\end{lem}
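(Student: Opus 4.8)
The plan is to square both sides of~\eqref{eq:Coulomb_3} (both are nonnegative) and reduce the resulting inequality to a manifestly nonnegative expression. Write $a = \n{x}$, $b = \n{y}$ and $s = x\cdot y$; the Cauchy--Schwarz inequality gives $s \leq ab$, and $\n{y-x}^2 = a^2 + b^2 - 2s$. Expanding the squared norm on the left gives $\n{\frac{x}{\n{x}^3} - \frac{y}{\n{y}^3}}^2 = \frac{1}{a^4} + \frac{1}{b^4} - \frac{2s}{a^3b^3}$, while the square of the right-hand side is $\frac{\n{y-x}^2(a+b)^2}{a^4b^4}$.

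Multiplying the target inequality through by $a^4b^4 > 0$, it becomes the polynomial inequality
\[
	a^4 + b^4 - 2sab \ \leq\ (a^2+b^2-2s)(a+b)^2 .
\]
The next step is the routine expansion of the right-hand side as $(a^2+b^2)(a+b)^2 - 2s(a+b)^2 = a^4 + b^4 + 2a^3b + 2ab^3 + 2a^2b^2 - 2sa^2 - 4sab - 2sb^2$. Cancelling the common $a^4 + b^4$, moving everything to one side and dividing by $2$, the inequality is seen to be equivalent to
\[
	0 \ \leq\ ab\,(a^2 + ab + b^2) - s\,(a^2 + ab + b^2) \ =\ (ab - s)\,(a^2 + ab + b^2) ,
\]
which holds because $a^2 + ab + b^2 > 0$ and $ab - s \geq 0$ by Cauchy--Schwarz; equality occurs exactly when $x$ and $y$ are positively proportional.

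There is no real obstacle here: the only mild care needed is in the algebraic bookkeeping, and in noticing that the leftover terms factor through $a^2 + ab + b^2$, which is precisely what lets the bound $s \leq ab$ close the argument. It is worth remarking that the statement covers all $x,y \in \Rd\setminus\set{0}$, including the degenerate configurations in which the segment $[x,y]$ meets the origin; the squaring argument treats every case uniformly, so no separate discussion is required.
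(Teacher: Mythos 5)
Your proof is correct: the algebra checks out, the reduction of the squared inequality to $(ab-s)(a^2+ab+b^2)\geq 0$ with $s=x\cdot y\leq ab$ is valid, and the identification of the equality case is accurate. However, your route is genuinely different from the paper's. The paper first observes the exact identity
\begin{equation*}
	\n{\frac{x}{\n{x}^3} - \frac{y}{\n{y}^3}} = \frac{\n{\,\n{y}\,y - \n{x}\,x\,}}{\n{x}^{2}\n{y}^{2}},
\end{equation*}
then writes $\n{y}\,y - \n{x}\,x = \n{y}(y-x) + \(\n{y}-\n{x}\)x$ and applies the triangle inequality together with the reverse triangle inequality $\n{\n{y}-\n{x}}\leq\n{y-x}$; this makes the appearance of the factor $\n{y-x}\(\n{x}+\n{y}\)$ structurally transparent and avoids any expansion. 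Your approach instead squares both sides and grinds the result down to a polynomial inequality that happens to factor cleanly through $a^2+ab+b^2$; it is more computational but entirely elementary, treats all configurations uniformly as you note, and has the small bonus of exhibiting the equality case ($x$ and $y$ positively proportional) explicitly, which the paper does not record. Both proofs are of comparable length and rigor.
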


	\begin{proof} %\footnotesize
		Since
		\begin{align*}
			\n{\frac{x}{\n{x}^3} - \frac{y}{\n{y}^3}}^2 &= \frac{\n{y}^6\n{x}^2+\n{x}^6\n{y}^2 - 2\,\n{y}^3\n{x}^3\,x\cdot y}{\n{x}^6\n{y}^6}
			\\
			&= \frac{\n{y}^{4}+\n{x}^{4} - 2\,\n{y}\n{x}\,x\cdot y}{\n{x}^{4}\n{y}^{4}} = \frac{\n{\n{y}y - \n{x}x}^2}{\n{x}^{4}\n{y}^{4}}
		\end{align*}
		we deduce that
		\begin{equation*}
			\n{\frac{x}{\n{x}^3} - \frac{y}{\n{y}^3}} = \frac{\n{\n{y}y - \n{x}x}}{\n{x}^{2}\n{y}^{2}} = \frac{\n{\n{y}(y-x) + \(\n{y}-\n{x}\)x}}{\n{x}^2\n{y}^2}
		\end{equation*}
		which gives Inequality~\eqref{eq:Coulomb_3} by the triangle inequality.
	\end{proof}
	
	\begin{lem}\label{lem:log-lip_0}
		Let $\nabla K = \frac{x}{\n{x}^3}$. Then
		\begin{equation*}
			\n{\nabla K*\rho(x)-\nabla K*\rho(y)} \leq 8\pi\Nrm{\rho}{L^\infty} \n{x-y} \(c + \ln_+\!\(\frac{R^2}{\n{x-y}^2}\)\),
		\end{equation*}
		with $R^3 = \frac{3\,\Nrm{\rho}{L^1}}{8\pi\,\theta_1^3 \Nrm{\rho}{L^\infty}}$, $\theta_1 = \frac{3-\sqrt{5}}{2}$ and $c < 1.57$.
	\end{lem}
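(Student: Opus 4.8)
The plan is to split the convolution integral $\nabla K * \rho(x) - \nabla K * \rho(y)$ into a short-range part, where $\rho$ is close to $x$ or $y$, and a long-range part, and to estimate each contribution separately using Inequality~\eqref{eq:Coulomb_3}. Concretely, set $r := \n{x-y}$ and fix a cut-off radius $\delta$ to be optimized; write $\nabla K * \rho(x) - \nabla K * \rho(y) = \intd \bigl(\nabla K(x-z) - \nabla K(y-z)\bigr)\rho(z)\d z$ and decompose the domain of integration into the region where $\min(\n{x-z},\n{y-z}) \leq \delta$ (near zone) and its complement (far zone). On the near zone I would bound $\nabla K(x-z)$ and $\nabla K(y-z)$ separately by $\n{x-z}^{-2}$ and $\n{y-z}^{-2}$, and integrate $\n{x-z}^{-2}$ over the ball $\ball(x,2\delta)$ (which contains the relevant region up to a harmless enlargement), using $\Nrm{\rho}{L^\infty}$; this produces a term proportional to $\Nrm{\rho}{L^\infty}\,\delta$. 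On the far zone, where both $\n{x-z}$ and $\n{y-z}$ are at least $\delta$, I would use Inequality~\eqref{eq:Coulomb_3}, which gives $\n{\nabla K(x-z) - \nabla K(y-z)} \leq r\bigl(\n{x-z}^{-2}\n{y-z}^{-1} + \n{x-z}^{-1}\n{y-z}^{-2}\bigr)$; integrating $\n{x-z}^{-3}$-type kernels against $\rho$ over the annular region $\delta \leq \n{x-z} \leq R$ yields a logarithm $\ln(R/\delta)$ (with $\Nrm{\rho}{L^\infty}$), while the tail $\n{x-z} > R$ is controlled by $\Nrm{\rho}{L^1}$ through $\n{x-z}^{-3} \leq R^{-3}$, and the choice of $R$ is exactly calibrated so that this $L^1$ contribution matches the $L^\infty$ scale.

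The natural choice is $\delta \sim r$ (more precisely $\delta$ a fixed multiple of $r$, or $\delta = r$ directly when $r \leq R$, and one checks the estimate is trivial when $r \gtrsim R$ since then the logarithm is absent and the near-zone bound already suffices). With $\delta = r$, the near-zone term is $\lesssim \Nrm{\rho}{L^\infty}\, r$, the annular term is $\lesssim \Nrm{\rho}{L^\infty}\, r \,\ln(R/r) = \frac{1}{2}\Nrm{\rho}{L^\infty}\, r\, \ln_+(R^2/r^2)$, and the far tail is $\lesssim \Nrm{\rho}{L^\infty}\, r$ as well once $R$ is defined by $R^3 = \frac{3\Nrm{\rho}{L^1}}{8\pi\theta_1^3\Nrm{\rho}{L^\infty}}$. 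Summing and collecting the numerical prefactors (the $4\pi$ from the volume of the unit ball, the $8\pi$ normalisation, and the value $\theta_1 = \frac{3-\sqrt5}{2}$ coming from balancing the near and far contributions optimally) gives the stated bound with the constant $c$; the threshold $\theta_1$ is chosen precisely to make the two $O(r)$ contributions balance, which is what pins down both $R$ and the admissible value of $c < 1.57$.

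The main obstacle is the bookkeeping of the explicit constants: getting the sharp $c < 1.57$ requires carefully choosing the cut-off radius as $\theta_1 R$ rather than $r$ naively, splitting into three regions ($\n{x-z} \leq \theta_1 R$ near $x$, symmetrically near $y$, and the far region), and optimizing the free parameter $\theta_1$ by solving a scalar minimization (this is where $\theta_1 = \frac{3-\sqrt5}{2}$ and the auxiliary $\theta_2$ with $\theta_2^2 = (1-\theta_2)^3$ enter in the refined version of Lemma~\ref{lem:log-lip}). The analytic content is entirely elementary — it is just the pointwise inequality~\eqref{eq:Coulomb_3} plus integration of radial kernels in $\Rd$ — but tracking every factor of $\pi$ and every power of $\theta_1$ through the three regions is delicate, and a slightly suboptimal split would give a worse constant. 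Everything else (measurability, that $\nabla K * \rho$ is well-defined under $\rho \in L^1 \cap L^\infty$) is standard.
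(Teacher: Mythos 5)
Your proposal follows the paper's proof essentially verbatim: the same pointwise inequality~\eqref{eq:Coulomb_3}, the same near/annulus/far decomposition with radial integration against $\Nrm{\rho}{L^\infty}$, the same calibration of $R$ through $\Nrm{\rho}{L^1}/\Nrm{\rho}{L^\infty}$, and the same final optimization over $\theta_1,\theta_2$ to pin down $c<1.57$. The only slip is in your description of the refinement: the paper's near-zone cutoff is $\theta_k\n{x-y}$ (a multiple of $\n{x-y}$, with~\eqref{eq:Coulomb_3} still applied there via $\n{y-w}\geq(1-\theta_k)\n{x-y}$, rather than your cruder triangle-inequality bound, which by itself would already cost $c\geq 2$), not $\theta_1 R$ --- but this does not affect the soundness of the argument.
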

	
	\begin{proof}
		Using Inequality~\eqref{eq:Coulomb_3} yields
		\begin{multline*}
			\n{\nabla K * \rho(x) - \nabla K * \rho(y)} \leq \intd \rho(w) \n{\frac{x-w}{\n{x-w}^3} - \frac{y-w}{\n{y-w}^3}} \d w
			\\
			\leq \n{y-x} \(\intd \frac{\rho(w)}{\n{x-w}^2\n{y-w}} + \frac{\rho(w)}{\n{x-w}\n{y-w}^2} \d w\).
		\end{multline*}
		Let $I_{\Omega}(k) = \int_{\Omega} \frac{\rho(w)}{\n{x-w}^{3-k}\n{y-w}^{k}} \d w$, $\theta\in(0,1)$ and $\delta := \n{x-y}$. Then since $\n{x-w}\leq \theta\, \delta$ implies $\n{y-w} \geq \(1-\theta\) \delta$, we get
		\begin{equation*}
			I_{\n{x-w}\leq \theta \delta}(k) \leq \frac{4\pi}{\(1-\theta\)^k \delta^k} \Nrm{\rho}{L^\infty} \int_0^{\theta \delta} r^{-1+k} \d r \leq \frac{4\pi\,\theta^k}{k\(1-\theta\)^k} \Nrm{\rho}{L^\infty}.
		\end{equation*}
		On another side
		\begin{equation*}
			I_{\n{x-w},\n{y-w}\geq R}(k) \leq \frac{\Nrm{\rho}{L^1}}{R^3}.
		\end{equation*}
		Finally, for $\delta\theta \leq R$,
		\begin{equation*}
			I_{\substack{\theta\delta \leq \n{x-w} \leq R\\\n{x-w}\leq \n{y-w}}}(k) \leq 4\pi \Nrm{\rho}{L^\infty} \int_{\theta\delta}^R r^{-1} \d w = 4\pi \Nrm{\rho}{L^\infty} \ln_+\!\(\frac{R}{\theta\delta}\).
		\end{equation*}
		With the same estimates symmetrized in $(x,y)$, one finally gets for any $(\theta,\vartheta)\in(0,1)^2$,
		\begin{align*}
			I &:= I_{\Rd}(1) + I_{\Rd}(2)
			\\
			&\leq 2\frac{\Nrm{\rho}{L^1}}{R^3} + 8\pi \Nrm{\rho}{L^\infty} \(\frac{\theta}{1-\theta} + \frac{\vartheta^2}{2\(1-\vartheta\)^2} + \ln_+\!\(\frac{R}{\theta\delta}\) + \ln_+\!\(\frac{R}{\vartheta\delta}\)\).
		\end{align*}
		For $k\in\set{1,2}$, let $\theta_k\in(0,1)$ be the solution of $\theta_k^k = \(1-\theta_k\)^{k+1}$. Then the optimal $\theta$ and $\vartheta$ above are of the form $\theta = \min(\theta_k,\frac{R}{\delta})$. To simplify, we just take $\theta = \theta_1$ and $\vartheta = \theta_2$. This gives
		\begin{equation*}
			I \leq 2\frac{\Nrm{\rho}{L^1}}{R^3} + 8\pi \Nrm{\rho}{L^\infty} \(\frac{3}{2}-\theta_1-\frac{1}{2}\theta_2 + \ln_+\!\(\frac{R}{\theta_1\delta}\) + \ln_+\!\(\frac{R}{\theta_2\delta}\)\)
		\end{equation*}
		where $\theta_1 = \frac{3-\sqrt{5}}{2}\in(0.38,0.39)$ and $\theta_2 \in (0.43,0.44)$.
		Optimizing in $R$ when $\delta/R$ is small suggests to take $R^3 = \frac{3\,\Nrm{\rho}{L^1}}{8\pi \Nrm{\rho}{L^\infty}}$, yielding
		\begin{align*}
			I &\leq 8\pi \Nrm{\rho}{L^\infty} \(\frac{2}{3} + \frac{3}{2}-\theta_1-\frac{1}{2}\theta_2 + \ln_+\!\(\frac{R}{\theta_1\delta}\) + \ln_+\!\(\frac{R}{\theta_2\delta}\)\)
			\\
			&< 8\pi \Nrm{\rho}{L^\infty} \(\frac{2}{3} +\frac{\sqrt{5}-\theta_2}{2} + \ln_+\!\(\frac{R^2}{\theta_1^2\delta^2}\)\).
		\end{align*}
	\end{proof}

%% ********************  Bibliographie  ********************

%\renewcommand{\bibname}{\centerline{Bibliography}}
\bibliographystyle{abbrv} % apalike, ieee, plain, alpha, unsrt, abbrv
\bibliography{Vlasov}

\end{document}